%% file: main.tex
\documentclass[journal,twoside,web]{ieeecolor}
\usepackage{lcsys}
\usepackage{cite}
\usepackage{amsmath,amssymb,amsfonts}
\usepackage{algorithmic}
\usepackage{graphicx}
\usepackage{textcomp}
\usepackage{comment}

\usepackage[colorlinks = true,
            linkcolor = cyan,
            urlcolor  = blue,
            citecolor = blue,
            anchorcolor = blue
            ]{hyperref}

\newtheorem{theorem}{Theorem}

\newtheorem{proposition}{Proposition}

\newtheorem{lemma}{Lemma}

\newtheorem{assumption}{Assumption}

\newtheorem{example}{Example}
\newtheorem{problem}{Problem}
\newtheorem{remark}{Remark}

\usepackage[nameinlink]{cleveref}
\crefname{equation}{}{}
\crefname{theorem}{Theorem}{Theorems}
\crefname{corollary}{Corollary}{Corollaries}
\crefname{example}{Example}{Examples}
\crefname{problem}{Problem}{Problems}
\crefname{assumption}{Assumption}{Assumptions}
\crefname{lemma}{Lemma}{Lemmas}
\crefname{proposition}{Proposition}{Propositions}
\crefname{figure}{Figure}{Figures}
\crefname{table}{Table}{Tables}
\crefname{fact}{Fact}{Facts}
\crefname{conjecture}{Conjecture}{Conjectures}
\crefname{section}{Section}{Sections}
\crefname{appendix}{Appendix}{Appendices}
\Crefname{equation}{}{}
\Crefname{theorem}{Theorem}{Theorems}
\Crefname{corollary}{Corollary}{Corollaries}
\Crefname{example}{Example}{Examples}
\Crefname{lemma}{Lemma}{Lemma}
\Crefname{proposition}{Proposition}{Proposition}
\Crefname{figure}{Figure}{Figures}
\Crefname{table}{Table}{Tables}
\Crefname{section}{Section}{Sections}
\Crefname{appendix}{Appendix}{Appendices}

\input{commands.tex}

\begin{document}

\title{On the Strong Duality in Continuous-time and Discrete-time Linear Quadratic Regulators}

\author{Yuto Watanabe, \IEEEmembership{Student Member, IEEE}, and Yang Zheng, \IEEEmembership{Senior Member, IEEE}
\thanks{This work is supported by NSF CMMI 2320697 and NSF CAREER 2340713. Y. Watanabe and Y. Zheng are with the Department of Electrical
and Computer Engineering, University of California San Diego; \texttt{\{y1watanabe,zhengy\}@ucsd.edu}.}}

\maketitle
\thispagestyle{plain}
\vspace{-6mm}

\begin{abstract}
This paper revisits the strong duality in the linear quadratic regulator (LQR)
for continuous-time and discrete-time systems, and explores its interconnection with typical assumptions and the uniqueness of primal-dual solutions.
Using a linear operator $\Psi$, we formulate a common nonconvex LQR problem that captures both time domains. We then derive its Lagrange dual problem and establish the strong duality via
a rank-constrained tight semidefinite program (SDP) relaxation.
Further, we show that the primal-dual optimal solutions to the SDP relaxation, after dropping the rank constraint, recover the classical~algebraic Riccati equations and optimal feedback gains in a constructive manner.
The dual derivation and strong duality analysis rely on mild standard assumptions and exploit the properties of the linear operator and its adjoint,
revealing a structural symmetry between the two time~domains.
\end{abstract}

\begin{IEEEkeywords}
LQR, strong duality, semidefinite program (SDP), algebraic Riccati equation.
\end{IEEEkeywords}

\input{arXiv-sections/section_i}
\input{arXiv-sections/section_ii}

\input{arXiv-sections/section_iii}
\input{arXiv-sections/section_iv}

\section{Conclusion}\label{section:conclusion}

We presented a unified duality analysis for~continuous-time and discrete-time LQRs.
Using the linear operator $\Psi$, we formulated a common nonconvex primal problem for both time domains.
Deriving the Lagrange dual,
we demonstrated the strong duality and optimal solutions that recovered the LQR optimal gains and AREs. 
Moreover, we showed that the common formulation admits a tight SDP relaxation with the same dual problem, highlighting an alternative aspect of LQR as a well-behaved QCQP.

\bibliographystyle{IEEEtran}
\bibliography{ref.bib}

\appendix

\crefalias{section}{appendix}
\crefalias{subsection}{appendix}
\crefalias{subsubsection}{appendix}

\input{arXiv-sections/appendix}
\end{document}

%% file: commands.tex
\newcommand{\tr}{{{\mathsf T}}}
\newcommand{\Tr}{{{\mathbf{Tr}}}}
\newcommand{\her}{{{\mathsf H}}}

\newcommand{\sdp}{\texttt{sdp}}

\newcommand{\ct}{\texttt{ct}}%
\newcommand{\dt}{\texttt{dt}}%

\newcommand{\inner}[2]{\left\langle #1,#2\right\rangle}

%% file: arXiv-sections/section_i.tex
\section{Introduction}
\label{sec:introduction}

The Linear Quadratic Regulator (LQR) is arguably the most fundamental optimal control problem \cite{kalman1960contributions},
with significant theoretical importance and practical applications.
The goal is to design an optimal controller for a linear dynamical~system by minimizing a quadratic cost accumulated along the system trajectory.
The LQR problem is known to enjoy many favorable theoretical features.
For example, an elegant fact is that in the infinite-horizon formulation, the optimal control policy is linear and static of the form 
$u=Kx$.
It also~admits a convex semidefinite program (SDP) reformulation through a change of variables. Moreover, a nonconvex LQR formulation in the policy space exhibits a strongly convex-like landscape, known as \textit{gradient dominance}~\cite{fazel2018global,watanabe2025revisiting,watanabe2026gradient,mohammadi2019global}.
Recently, 
as an alternative perspective, 
Bamieh \cite{bamieh2024linear} and our recent work \cite{watanabe2025revisiting} revisited LQR using an SDP and duality strategy \cite{vandenberghe1996semidefinite},
formalizing a duality-based derivation of the optimal gain.
Our work \cite{watanabe2025revisiting}~also shows that for continuous-time systems, a nonconvex LQR can be viewed as a quadratically constrained quadratic programming (QCQP) with a tight SDP relaxation and zero duality gap.

It is known that both continuous-time and discrete-time LQR problems 
satisfy the above favorable properties almost in the same way.
However, the majority of existing studies
have dealt with 
each time domain separately
(including the duality analyses \cite{vandenberghe1996semidefinite,bamieh2024linear,watanabe2025revisiting}).
While such a treatment is natural
 and simplifies the exposition, this makes the symmetry and subtlety between the two time domains less explicit.
Toward closing this gap, this paper revisits the strong duality and develops a common framework for continuous-time and discrete-time LQRs.
The topic of strong duality in LQR is not new, and there exist many duality-based analyses and synthesis for linear control \cite{yao2001primal,balakrishnan2003semidefinite,scherer2006lmi,rantzer1996kalman,gattami2009generalized,you2016direct,watanabe2025semidefinite}.  
To our knowledge, continuous-time and discrete-time LQRs have been analyzed separately via different proof techniques \cite{lee2018primal,bamieh2024linear,watanabe2025revisiting,zhao2023global,yao2001primal}.
In these works, slightly different assumptions were made, particularly in terms of the stochasticity in initial conditions and the uniqueness of the optimal gain.
The work \cite{zhao2023global} showed strong duality for a discrete-time LQR with a risk constraint, but it requires stronger assumptions on the weight and covariance matrices.
In contrast, we impose only basic and standard assumptions, which cover LQRs with both stochastic and deterministic initial conditions. Further, we  
work with a common LQR formulation for two time domains using
   a linear operator $\Psi$ \cite{watanabe2025revisiting},
   inspired by earlier works for linear matrix inequalities (LMIs) \cite{iwasaki2005generalized,scherer2006lmi,you2016direct}.
   While this formulation is nonconvex, we can
   derive the Lagrange dual 
   and an SDP relaxation,
   similarly to Shor's relaxation of QCQPs \cite{shor1987quadratic}.
   We then demonstrate the strong duality by showing the tightness of this relaxation and by utilizing the SDP duality theory.

   Our results provide the following insights. 
   First, our results clarify that 
   in the strong duality analysis, algebraic Riccati equations (AREs), and optimal LQR gains,
   the distinction~between the two time domains for LQR is purely encoded in the linear operator $\Psi$.
    In our proof, similar to Shor's SDP relaxation for QCQPs,
    we derive a common SDP relaxation from a rank-constrained form of the LQR.
    The SDP relaxation
    turns out to be lossless, and the desirable strong duality follows from 
    the analysis of its Karush-Kuhn-Tucker (KKT) conditions.
     Second,
   we clarify that strong duality holds for both time domains under mild standard assumptions, covering both deterministic and stochastic initial conditions.
   Compared with \cite{yao2001primal,lee2018primal,zhao2023global}, our result only requires the initial covariance matrix $\mathbb{E}[x_0x_0^\tr]$ to be positive semidefinite (\cref{theorem:strong_duality}). This condition includes the deterministic LQR as a special case,~is~sufficient for strong duality,
   and separates strong duality from the uniqueness of the optimal gain.
   This also indicates that strong duality is a less fragile manifestation of benign nonconvexity than the gradient dominance \cite{fazel2018global,mohammadi2019global,watanabe2025revisiting,watanabe2026gradient}.
    Finally,
    our rank-structure-based proof identifies the optimal gains \textit{constructively}, without assuming their form in advance. 
    This may be viewed as a benefit of our approach compared with \cite{lee2018primal,bamieh2024linear}, and further enables us to clarify when the optimal LQR gain is unique.
    In particular, we clarify that the primal SDP relaxation always admits a unique solution (\cref{lemma:strong-duality-SDP}), while
    the primal nonconvex problem and the dual may have many optimal solutions when $\mathbb{E}[x_0x_0^\tr]\nsucc 0$ (\cref{proposition:optimal-solutions}).
   
The rest of this paper is organized as follows.
\cref{section:problem-formulation} formulates the LQR problems and presents the problem statement.
Then, we state our main result in \cref{section:strong-duality}, and present its proof in \cref{section:proof_duality}.
Finally, \cref{section:conclusion} concludes the paper.

%% file: arXiv-sections/section_ii.tex
\section{Problem formulation}\label{section:problem-formulation}
Here, we formulate the LQR problems for continuous-time and discrete-time systems,
and provide our problem statement.

\subsection{Continuous-time and discrete-time LQR problems}
\subsubsection{Continuous-time LQR} Consider a continuous-time linear time-invariant (LTI) system
\begin{equation}\label{eq:dynamic_continuous}
    \dot{x}(t)=A x(t)+B u(t),
\end{equation}
where $x(t)\in\mathbb{R}^n$ is the state and $u(t)\in\mathbb{R}^m$ is the input. Let $x(0)=x_0\in\mathbb{R}^n$, and assume that $x_0$ is a random variable with zero mean and covariance
$
W=\mathbb{E}[x_0x_0^\tr]\in\mathbb{S}_+^n
$, where $\mathbb{S}_+^n$ is the set of $n\times n
$ positive semidefinite matrices.

Given weight matrices $Q\in\mathbb{S}_+^n$ and $R\in\mathbb{S}_{++}^m$ (denoting the set of $m\times m$ positive definite matrices),  consider the infinite-horizon LQR problem
to minimize the cost $J_{\ct}(K)$ below:
\begin{equation}\label{eq:LQR-stochastic-noise_continuous}
\begin{aligned}
\min_{K}\;
J_{\ct}(K)\!:=\!
\mathbb{E}\!\left[
\int_0^\infty \bigl(x(t)^\tr Qx(t)+u(t)^\tr Ru(t)\bigr)\,dt
\right]
\end{aligned}
\end{equation}
over all static state-feedback controllers of the form  $u(t)=Kx(t),$
where $K\in\mathbb{R}^{m\times n}$ belongs to the set of stabilizing gains
$
\mathcal{K}_{\ct}
\!:=\!
\left\{
K
\;\middle|\;
\operatorname{Re}\bigl(\lambda_i(A+BK)\bigr)<0,\, \forall i=1,\dots,n
\right\}
$.
It is~well known that this class contains the continuous-time LQR optimal controller. Hence, \cref{eq:LQR-stochastic-noise_continuous} is a finite-dimensional problem over $K\in\mathcal{K}_{\ct}\subset\mathbb{R}^{m\times n}$. Note, however, that both the objective $J_{\ct}$ and the feasible set $\mathcal{K}_{\ct}$ are nonconvex.

\subsubsection{Discrete-time LQR}
Consider a discrete-time 
LTI system
\begin{equation}\label{eq:dynamic_discrete-time}
    x_{t+1}=A x_t+B u_t,
\end{equation}
where $x_t\in\mathbb{R}^n$ is the state and $u_t\in\mathbb{R}^m$ is the input. As in the continuous-time case, let $x_0\in\mathbb{R}^n$ be a random initial state with zero mean and covariance
$
W=\mathbb{E}[x_0x_0^\tr]\in\mathbb{S}_+^n.
$ 

Given weight matrices $Q\in\mathbb{S}_+^n$ and $R\in\mathbb{S}_{++}^m$, the infinite-horizon discrete-time LQR problem is
\begin{equation}\label{eq:LQR-stochastic-noise_discrete}
\begin{aligned}
\min_{K}\quad
J_{\dt}(K):=
\mathbb{E}\!\left[
\sum_{t=0}^\infty \bigl(x_t^\tr Qx_t+u_t^\tr Ru_t\bigr)
\right]
\end{aligned}
\end{equation}
over all static state-feedback controllers
$u_t=Kx_t,$
where $K$ belongs to the set of Schur stabilizing gains
$
\mathcal{K}_{\dt}
:=
\left\{
K\in\mathbb{R}^{m\times n}
\;\middle|\;
\rho(A+BK)<1
\right\}
$ with $\rho(\cdot)$ denoting the spectral radius. 
This class is also rich enough~to~contain the discrete-time LQR optimal gain. Thus, \cref{eq:LQR-stochastic-noise_discrete} is again a finite-dimensional problem over $K\in\mathcal{K}_{\dt}\subset\mathbb{R}^{m\times n}$.~As~in~continuous time, this problem is nonconvex, and so is the set~$\mathcal{K}_{\dt}$.

\vspace{1mm}
Throughout the paper, we make a standard assumption that ensures the existence of the optimal LQR  gain 
$K^\star =K^\star_\ct$ (in continuous time) or $K^\star_\dt$ (in discrete time).
\begin{assumption} \label{assumption:stabilizable}
    $(A, B)$ is stabilizable
    and $(Q^{1/2}, A)$ is detectable.\footnote{Stabilizability and detectability in the two time domains indicate essentially the same concepts, but their mathematical characterizations slightly differ from each other; see \cite{zhou1996robust}.} 
    The covariance matrix $W$ satisfies $W\in \mathbb{S}_{+}^n\setminus\{0\}$.
\end{assumption}

Note that \Cref{assumption:stabilizable} covers the standard LQR with a deterministic initial condition $x_0$, which corresponds to $W=x_0x_0^\tr$. 
This assumption is also connected to the 
\textit{stochastic LQR} with a Gaussian process noise and $\mathcal{H}_2$ control \cite{zhou1996robust,skelton1997unified}.

\subsection{A unified reformulation of LQR}

It is known that, after a suitable reformulation using Lyapunov equations, both continuous-time and discrete-time LQRs admit an operator-based common form. 
We begin with the standard Lyapunov representation of the LQR cost.
For continuous time, if $K$ belongs to $\mathcal{K}_{\ct}$
(i.e., stabilizing), then
\[
J_{\ct}(K)=\langle Q+K^\tr RK,X\rangle,
\]
where  $\langle S,T\rangle:=\Tr(S^\tr T)$ denotes the inner product of two matrices, and
$X$ uniquely solves 
the Lyapunov equation:
\begin{equation}\label{eq:Lyapunov_eq_continuous}
    A_KX+XA_K^\tr+W=0,
\end{equation}
with $A_K:=A+BK$ being the closed-loop matrix.
The matrix $X$ can be interpreted as the cumulative covariance
$X=\int_0^\infty x(t)x(t)^\tr dt$.
Similarly, for discrete time, if $K$ belongs to $\mathcal{K}_{\dt}$ (i.e., stabilizing), then
\[
J_{\dt}(K)=\langle Q+K^\tr RK,X\rangle,
\]
where $X$ uniquely solves
the discrete-time Lyapunov equation:
\begin{equation}\label{eq:Lyapunov_eq_discrete}
    A_KXA_K^\tr-X+W=0.
\end{equation}
In this case, the matrix $X$ corresponds to
$X=\sum_{t=0}^\infty x_tx_t^\tr $.
These representations are standard; see, e.g.,
\cite{skelton1997unified,fazel2018global,watanabe2025revisiting,watanabe2026gradient}.

The two expressions already share the same objective $\langle Q+K^\tr RK,X\rangle$.
Their difference lies only in the Lyapunov equations \cref{eq:Lyapunov_eq_continuous,eq:Lyapunov_eq_discrete}: in continuous time, the system's evolution is captured by the derivative 
\begin{equation*}
    \int_0^\infty \frac{d}{dt}(x(t)x(t)^\tr)dt
=
\int_0^\infty (\dot x(t)x(t)^\tr+x(t)\dot{x}(t)^\tr )dt,
\end{equation*}leading to $A_KX+XA_K^\tr$, whereas in discrete time we use a one-step difference $\sum_{t=0}^\infty(x_{t+1}x_{t+1}-x_tx_t)$, leading to $A_KXA_K^\tr-X$.
This purely algebraic distinction motivates the following common form:
\begin{equation}\label{eq:LQR_unif_Lyapunov}
    \Psi\!\left(
   \begin{bmatrix}
       A_K\\
       I
   \end{bmatrix}
   X
   \begin{bmatrix}
       A_K\\
       I
   \end{bmatrix}^\tr
   \right)+W=0,
\end{equation}
where $\Psi:\mathbb{S}^{2n}\to\mathbb{S}^n$ is the linear operator defined by
$\Psi=\Psi_{\ct}$ in continuous time and $\Psi=\Psi_{\dt}$ in discrete time, with
\begin{align*}
    \Psi_{\ct}
    \left(
    \begin{bmatrix}
        F & G\\
        G^\tr & H
    \end{bmatrix}
    \right)
    =G+G^\tr, 
    \Psi_{\dt}
    \left(
    \begin{bmatrix}
        F & G\\
        G^\tr & H
    \end{bmatrix}
    \right)
    &=F-H.
\end{align*}
Then \cref{eq:Lyapunov_eq_continuous} is equivalent to
\cref{eq:LQR_unif_Lyapunov} with $\Psi=\Psi_{\ct}$, while
\cref{eq:Lyapunov_eq_discrete}~is equivalent to
\cref{eq:LQR_unif_Lyapunov} with $\Psi=\Psi_{\dt}$.
This operator-based~unification was used in our recent work \cite{watanabe2026gradient}, and related ideas also appear in earlier LMI formulations, such as \cite{iwasaki2005generalized,scherer2006lmi,you2016direct}.

With \cref{eq:LQR_unif_Lyapunov}, we introduce the following reformulation:
\begin{equation}\label{eq:primal_KX}
   \begin{aligned}
   p^\star=
   \min_{K,X}\quad
    & \langle Q+K^\tr RK,X\rangle\\
    \text{subject to}\quad
    & \cref{eq:LQR_unif_Lyapunov},\quad X\succeq0,
\end{aligned}
\tag{Primal}
\end{equation}
which serves as the primal LQR problem in this paper.
For each stabilizing gain $K$, the Lyapunov equations \cref{eq:Lyapunov_eq_continuous,eq:Lyapunov_eq_discrete} admit a unique solution $X\succeq0$.
In \cref{eq:primal_KX}, however, we treat $X$ as an explicit decision variable and remove the stabilizing constraint, which is convenient for deriving the dual problem. 

\begin{remark}\label{remark:equivalance}
When $W\nsucc0$, the feasible set of \cref{eq:primal_KX} may be larger than the set of stabilizing gains, since some non-stabilizing gains may also admit a feasible $X\succeq0$ satisfying \cref{eq:LQR_unif_Lyapunov}.
Thus, \cref{eq:primal_KX} is generally not an exact reformulation at the level of feasible points.
Nevertheless, \cref{eq:primal_KX} still attains its optimum at the stabilizing gain $K^\star$.
Consequently, it yields the same optimal value and recovers the same optimal controller as
\cref{eq:LQR-stochastic-noise_continuous} and \cref{eq:LQR-stochastic-noise_discrete} (as in \cref{proposition:optimal-solutions}).
For this reason, \cref{eq:primal_KX} can be viewed as a valid reformulation.
\hfill $\square$
\end{remark}

\subsection{Problem statement}

As expected, the reformulated problem \cref{eq:primal_KX} remains nonconvex, since the constraint \cref{eq:LQR_unif_Lyapunov} still couples $K$ and $X$ nonlinearly. However, this formulation \cref{eq:primal_KX} satisfies strong duality under mild conditions. In continuous time, the corresponding strong duality and its connection with Riccati equations are closely related to existing results\cite{balakrishnan2003semidefinite,bamieh2024linear,watanabe2025revisiting}. The discrete-time case has been studied in \cite{lee2018primal} under the assumption of $W\succ 0$.

The main purpose of this paper is to present a transparent and common proof strategy for both continuous-time and discrete-time LQRs through the operator $\Psi$.
In particular, our goal is to show that the two time domains share the same primal-dual mechanism, the associated algebraic Riccati equations (AREs), and the optimal gains in a common way.
This motivates the following problem.

\begin{problem}\label{Problem-1}
Under \cref{assumption:stabilizable}, derive the Lagrange dual problem of \cref{eq:primal_KX}, establish strong duality between the primal and dual problems, and characterize their solutions in a unified manner for continuous-time and discrete-time systems.
\end{problem}

%% file: arXiv-sections/section_iii.tex
\section{Strong duality in LQR}\label{section:strong-duality}

We here present the strong duality for the LQR in~\cref{eq:primal_KX}, and postpone the main proof techniques~to~\Cref{section:proof_duality}.

\subsection{Lagrange dual formulation}

We begin with deriving the Lagrange dual of \cref{eq:primal_KX}.
This follows the standard process, but we need to use the adjoint operator of $\Psi$ and its property to derive an explicit dual. 

Define the Lagrangian with the Lagrange multiplier $P\in\mathbb{S}^n$:
 \begin{align*}
        L(K,X,P)
        = 
        &
        \langle Q+K^\tr RK,X\rangle\\
        &+ \left\langle
        P,\Psi\left(\begin{bmatrix}
       A_K\\
       I
   \end{bmatrix}
   {X}
   \begin{bmatrix}
       A_K\\
       I
   \end{bmatrix}^\tr\right) +W
        \right\rangle.
    \end{align*}
Then, the dual problem is given as an abstract form below
\begin{equation} \label{eq:abstract-dual}
    \sup_{P\in\mathbb{S}^n}\quad 
    d(P)\quad
    \text{with}\quad
    d(P)=
    \inf_{K,X\succeq0} L(K,X,P).
\end{equation}

We 
derive an explicit form for function $d$.
First, we have
\begin{equation} \label{eq:two-identities}
\begin{aligned}
    \langle Q+K^\tr RK,X\rangle &= \left \langle \begin{bmatrix}
        Q & 0\\
        0& R
    \end{bmatrix},
    \begin{bmatrix}
        I\\
        K
    \end{bmatrix}X \begin{bmatrix}
        I\\
        K
    \end{bmatrix}^\tr\right \rangle, \\
\begin{bmatrix}
       A_K\\
       I
   \end{bmatrix}
   X
   \begin{bmatrix}
       A_K\\
       I
   \end{bmatrix}^\tr \!&= \!\begin{bmatrix}
       A&B\\
       I&0
   \end{bmatrix}
   \begin{bmatrix}
       I\\
       K
   \end{bmatrix}
   {X}
   \begin{bmatrix}
       I\\
       K
   \end{bmatrix}^\tr
   \begin{bmatrix}
       A&B\\
       I&0
   \end{bmatrix}^\tr\!\!.
   \end{aligned}
\end{equation}
Now, we introduce the adjoint operator $\Psi^*
$ of $\Psi$ (i.e., 
$\langle\Psi(Z),P\rangle = \langle Z,\Psi^*(P)\rangle$):
\begin{equation}\label{eq:adjoint_Psi}
            \Psi_\ct^*(P) =        \begin{bmatrix}
                0 & 1\\
                1 & 0
            \end{bmatrix}\otimes P,\quad
            \Psi_\dt^*(P)
            =
             \begin{bmatrix}
                1 & 0\\
                0 & -1
            \end{bmatrix}\otimes P.
        \end{equation}
    The derivation of the adjoint $\Psi^*
$ is standard, and we provide some details in \cref{appendix:Psi*}.
With \cref{eq:adjoint_Psi,eq:two-identities}, we 
can rewrite the Lagrangian as
\begin{align*}
        &L(K,X,P)
    \overset{(a)}{=}
    \left\langle 
    \begin{bmatrix}
        Q & 0\\
        0& R
    \end{bmatrix},
    \begin{bmatrix}
        I\\
        K
    \end{bmatrix}X \begin{bmatrix}
        I\\
        K
    \end{bmatrix}^\tr
    \right\rangle
    \nonumber
    \\
    &+ 
    \left\langle
     \begin{bmatrix}
       A & B\\
       I &0
   \end{bmatrix}^\tr {\Psi^*}(P) \begin{bmatrix}
       A & B\\
       I &0
   \end{bmatrix},
   \begin{bmatrix}
        I\\
        K
    \end{bmatrix}X \begin{bmatrix}
        I\\
        K
    \end{bmatrix}^\tr
    \right\rangle
    +
    \left\langle P,W \right\rangle,
    \end{align*}
    where (a) uses the definition of the adjoint operator $\Psi^*$ and inner product $\langle\cdot,\cdot\rangle$.
    Thus,
    a rearrangement 
    yields
\begin{equation}\label{eq:Lagrangian_transform}
  L(K,X,P)=      \langle 
    P,W
    \rangle+ 
    \left\langle 
    U(P),\begin{bmatrix}
        I\\
        K
    \end{bmatrix}X \begin{bmatrix}
        I\\
        K
    \end{bmatrix}^\tr
    \right\rangle,
\end{equation}
where we define
\begin{equation}\label{eq:Riccati-inequality}
U(P):=
     \begin{bmatrix}
       A & B\\
       I &0
   \end{bmatrix}^\tr \Psi^*(P) \begin{bmatrix}
       A & B\\
       I &0
   \end{bmatrix}
   +
   \begin{bmatrix}
        Q & 0\\
        0& R
    \end{bmatrix}. 
\end{equation}

This matrix $U(P)$ plays an important role in our subsequent discussions. Using \cref{eq:adjoint_Psi}, we can write a more explicit form as 
\begin{align}\label{eq:Riccati-inequality_explicit}
    U(P)
    =\begin{cases}
         \begin{bmatrix}
                A^\tr P+PA+Q & PB\\
                B^\tr P& R
            \end{bmatrix} &\text{(CT)}\\
            \begin{bmatrix}
                A^\tr PA-P+Q & A^\tr PB\\
                B^\tr PA& R+B^\tr PB 
            \end{bmatrix},
            &\text{(DT)}
    \end{cases}
\end{align}
where CT and DT stand for continuous time and discrete time, respectively.
From \cref{eq:Lagrangian_transform}, we have the following result.
\begin{lemma}\label{lemma:dual}
With \cref{assumption:stabilizable}, we have
\begin{equation*}
    d(P) = 
    \begin{cases}
        \langle P,W\rangle,&
        \text{if }
   U(P) \succeq 0\\
    -\infty, & \text{otherwise}.
    \end{cases}
\end{equation*}
\end{lemma}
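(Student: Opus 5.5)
From \cref{eq:Lagrangian_transform}, $L(K,X,P)=\langle P,W\rangle+\langle U(P),Z\rangle$ with $Z:=\begin{bmatrix} I\\ K\end{bmatrix}X\begin{bmatrix} I\\ K\end{bmatrix}^\tr$. Computing $d(P)$ therefore reduces to computing the infimum of the linear functional $\langle U(P),\cdot\rangle$ over the set
\[
\mathcal{Z}:=\left\{\begin{bmatrix} I\\ K\end{bmatrix}X\begin{bmatrix} I\\ K\end{bmatrix}^\tr \;\middle|\; K\in\mathbb{R}^{m\times n},\ X\succeq0\right\}.
\]
I will show that this infimum is $0$ when $U(P)\succeq0$ and $-\infty$ otherwise. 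Note that $\mathcal{Z}$ is a cone: it is closed under positive scaling through $X$.

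\textbf{Case $U(P)\succeq0$.} Every $Z\in\mathcal{Z}$ is positive semidefinite, so $\langle U(P),Z\rangle\ge0$. Taking $X=0$ (and any $K$) attains the value $0$. Hence $d(P)=\langle P,W\rangle$.

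\textbf{Case $U(P)\not\succeq0$.} Pick $v=(v_1,v_2)\in\mathbb{R}^n\times\mathbb{R}^m$ with $v^\tr U(P)v<0$. I want to realize $tvv^\tr$ as an element of $\mathcal{Z}$, so that the Lagrangian tends to $-\infty$ as $t\to\infty$. If $v_1\neq0$, choose $K$ with $Kv_1=v_2$ (for instance $K=v_2v_1^\tr/\|v_1\|^2$) and $X=t\,v_1v_1^\tr$. Then $Z=t\,vv^\tr$ and $L=\langle P,W\rangle+t\,v^\tr U(P)v\to-\infty$.

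The main obstacle is the subcase $v_1=0$: matrices in $\mathcal{Z}$ have the form $\begin{bmatrix} X & XK^\tr\\ KX & KXK^\tr\end{bmatrix}$ and cannot have a zero upper-left block with a nonzero lower-right block. Here $v^\tr U(P)v=v_2^\tr U_{22}v_2<0$, where $U_{22}$ is the lower-right block of $U(P)$. I will use a perturbation argument. Set $w=(\epsilon e,v_2)$ for a fixed unit vector $e$. By continuity, $w^\tr U(P)w<0$ for sufficiently small $\epsilon>0$. Since $\epsilon e\neq0$, the previous construction applies to $w$ and again gives $-\infty$.

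Alternatively, one can observe directly that $U_{22}=R\succ0$ in continuous time, so the subcase cannot occur there. In discrete time $U_{22}=R+B^\tr PB$ can be indefinite when $P$ is indefinite, so the perturbation argument is needed in that case. This step shows that $\mathcal{Z}$ is dense in $\mathbb{S}_+^{n+m}$ in the sense relevant to the infimum, and it is the only delicate point.

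Finally, I note that \cref{assumption:stabilizable} is not used in the argument itself. It is cited only because the dual is formed for a feasible primal problem, so that the case split over $P$ is meaningful.
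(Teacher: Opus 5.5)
Your proof is correct and follows the paper's argument: the value $\langle P,W\rangle$ is attained at $X=0$ when $U(P)\succeq0$, and otherwise you realize $t\,vv^\tr$ via $K=v_2v_1^\tr/\|v_1\|^2$, $X=t\,v_1v_1^\tr$, treating $v_1=0$ separately. In that subcase the paper takes $v$ to be an eigenvector (so $U_{12}(P)v_2=0$) and uses $X=ww^\tr$, $K=\eta v_2w^\tr$ with $\eta\to\infty$, which is your perturbation $(\epsilon e,v_2)$ with $\epsilon=1/\eta$ and the scaling folded into one parameter. Your aside that $\mathcal{Z}$ is ``dense'' in $\mathbb{S}_+^{n+m}$ is loose, since its elements have rank at most $n$, but the fact you actually use is correct and suffices: every rank-one $vv^\tr$ lies in the closure of $\mathcal{Z}$.
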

\vspace{1mm}
\begin{proof}
If $U(P)\succeq0$,  
it is clear from \cref{eq:Lagrangian_transform} that 
    $d(P)=\min_{K,X\succeq0}L(K,X,P) =\langle P,W\rangle$, where the minimum is achieved by $X=0$.
{Otherwise, we can show $d(P)=-\infty$ by a careful eigenvalue argument.}
We present some details in \cref{appendix:proof-dual}.
\end{proof}

Consequently, the dual problem of \cref{eq:primal_KX} is expressed as
\begin{equation}
    \label{eq:dual_P}
     d^\star= \sup_{P\in\mathbb{S}^n}\quad \langle P,W\rangle\quad
        \text{subject to}\quad
   U(P)
    \succeq0.
    \tag{Dual}
\end{equation}
Note that this dual problem \cref{eq:dual_P} is an SDP. We always have $p^\star\geq d^\star$ following the standard weak duality.
\begin{remark}
The matrix $U(P)$ plays an important role for LQR in both theoretical and practical ways.
The inequality $U(P)\succeq0$ is called an \textit{algebraic Riccati inequality} (ARI) and has many classical results \cite{lancaster1995algebraic}.
On the other hand, this matrix is also closely related to the model-free \textit{Q-learning} method \cite{lewis2012reinforcement} for discrete-time systems.
In particular, 
the Q-function for the discrete-time LQR
is given by 
\[
Q(x,u) = \begin{bmatrix}
    x\\
    u
\end{bmatrix}^\tr U(P)\begin{bmatrix}
    x\\
    u
\end{bmatrix}
\]
with  
$P$ from the ARE.\hfill$\square$
\end{remark}
\subsection{Strong duality and primal and dual solutions}
Despite the nonconvexity of the LQR problems, we can guarantee strong duality between \cref{eq:primal_KX} and \cref{eq:dual_P}.
\vspace{1pt}

\begin{theorem}[Strong duality]\label{theorem:strong_duality}
    With \cref{assumption:stabilizable}, the duality gap between \cref{eq:primal_KX} and \cref{eq:dual_P} is zero, i.e., $p^\star = d^\star$.
\end{theorem}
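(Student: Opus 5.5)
The plan is to show $p^\star\le d^\star$, since weak duality already gives $p^\star\ge d^\star$. I would go through an SDP relaxation of \cref{eq:primal_KX} obtained by lifting. Set
\[
Z=\begin{bmatrix} I\\ K\end{bmatrix}X\begin{bmatrix} I\\ K\end{bmatrix}^\tr=\begin{bmatrix} Z_{11}&Z_{12}\\ Z_{12}^\tr & Z_{22}\end{bmatrix}\in\mathbb{S}_+^{n+m}.
\]
By \cref{eq:two-identities}, the objective becomes $\langle \mathrm{diag}(Q,R),Z\rangle$ and the constraint \cref{eq:LQR_unif_Lyapunov} becomes the linear constraint
\[
\Psi\!\left(\begin{bmatrix}A&B\\ I&0\end{bmatrix}Z\begin{bmatrix}A&B\\ I&0\end{bmatrix}^\tr\right)+W=0 .
\]
Conversely, any $Z\succeq0$ with $Z_{12}^\tr=KZ_{11}$ and $Z_{22}=KZ_{11}K^\tr$ for some $K$ (a rank condition) comes from a primal feasible pair with $X=Z_{11}$. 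Dropping this rank structure gives a convex SDP whose value $p_{\sdp}^\star$ satisfies $p_{\sdp}^\star\le p^\star$. The same adjoint computation that led to \cref{eq:Lagrangian_transform} shows that the conic dual of this SDP is exactly \cref{eq:dual_P}.

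The key steps, in order, are as follows.

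First, I show that \cref{eq:dual_P} attains its value and that $d^\star$ is finite. Under \cref{assumption:stabilizable}, the stabilizing ARE solution $P^\star$ exists and satisfies $U(P^\star)\succeq0$ with a rank-$m$ structure. For any stabilizing $K$ with Lyapunov solution $X_K$, every dual feasible $P$ gives $\langle P,W\rangle\le J(K)$ (this is just weak duality), so $d^\star<\infty$. I would rather avoid relying on a Slater point for the dual, because $U(P)\succ0$ may fail.

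Second, I construct a primal point achieving $\langle P^\star,W\rangle$. Take $K^\star$ from the Schur complement of $U(P^\star)$, that is,
\[
K^\star=-R^{-1}B^\tr P^\star \ \text{(CT)},\qquad K^\star=-(R+B^\tr P^\star B)^{-1}B^\tr P^\star A \ \text{(DT)},
\]
so that $U(P^\star)\begin{bmatrix} I\\ K^\star\end{bmatrix}=\begin{bmatrix}\ast\\0\end{bmatrix}$, with the top block vanishing by the ARE. Hence $U(P^\star)\begin{bmatrix} I\\ K^\star\end{bmatrix}=0$. Next, let $X^\star\succeq0$ solve the Lyapunov equation for the stabilizing gain $K^\star$. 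Then $(K^\star,X^\star)$ is feasible for \cref{eq:primal_KX}, and by \cref{eq:Lagrangian_transform},
\[
J=L(K^\star,X^\star,P^\star)=\langle P^\star,W\rangle+\bigl\langle U(P^\star),[I;K^\star]X^\star[I;K^\star]^\tr\bigr\rangle=\langle P^\star,W\rangle .
\]
This is the complementary slackness $\langle U(P^\star),Z^\star\rangle=0$, and it yields $p^\star\le\langle P^\star,W\rangle\le d^\star$.

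The main obstacle is the first step. I need existence of $P^\star$ satisfying the ARE with $K^\star$ stabilizing under only stabilizability and detectability, and I need this argued in a unified way for both $\Psi$. I would cite the classical ARE theory for each time domain (e.g., \cite{zhou1996robust,lancaster1995algebraic}), and only check that the ARE is precisely the statement that the Schur complement of the $R$-block of $U(P)$ vanishes. Once that holds, the remaining steps are algebraic and identical for $\Psi_\ct$ and $\Psi_\dt$ via $\Psi^*$.

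A secondary point is that $W$ may be singular, so $X^\star$ may be singular. The argument does not need $X^\star\succ0$, because feasibility only requires $X^\star\succeq0$, and the Lyapunov solution is positive semidefinite for any stabilizing gain. If one wants the constructive route instead of citing the ARE, I would solve the relaxed SDP, use its KKT conditions to get a dual optimal $P$, and show that a primal optimal $Z$ satisfies the rank condition with $K=Z_{12}^\tr Z_{11}^{\dagger}$. That route, however, requires handling the attainment issues above.
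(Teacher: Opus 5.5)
Your proof is correct, and it takes a more direct route than the paper. You exhibit a zero-gap primal--dual pair. Because $U(P^\star)\begin{bmatrix} I\\ K^\star\end{bmatrix}=0$ (bottom block by the choice of $K^\star$, top block by the ARE) and $(K^\star,X^\star)$ satisfies the constraint exactly, \cref{eq:Lagrangian_transform} gives $p^\star\le\langle P^\star,W\rangle\le d^\star\le p^\star$. The only external input you need is the classical existence of the stabilizing solution $P^\star\succeq0$. That fact makes $U_{22}(P^\star)\succ0$ in discrete time, so $U(P^\star)\succeq0$, and it makes $K^\star$ stabilizing, so $X^\star\succeq0$ exists.

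The paper instead works through the relaxation \cref{eq:primal_sdp}. It uses the comparison theorem (\cref{lemma:comparison}) to get $d^\star=\langle P^\star,W\rangle<\infty$. It then uses dual strict feasibility (\cref{lemma:dual-strict-feasibility}, which in discrete time needs the implicit-function perturbation argument) together with SDP duality theory to obtain $p^\star_{\sdp}=d^\star$ and the existence of a primal SDP optimizer. Finally, complementary slackness and a rank count force every optimizer into the form $\begin{bmatrix} I\\ K^\star\end{bmatrix}X^\star\begin{bmatrix} I\\ K^\star\end{bmatrix}^\tr$, which is then primal feasible.

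Your argument avoids the Slater point, the maximality of $P^\star$, and the SDP strong duality theorem. It still yields dual optimality of $P^\star$, primal optimality of $(K^\star,X^\star)$, and tightness of the relaxation via $d^\star\le p^\star_{\sdp}\le p^\star$. Your Step 1 is in fact redundant, since the sandwich already gives finiteness and attainment.

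What the paper's route buys is that $K^\star$ is read off from complementary slackness rather than posited, and that the SDP optimizer is shown to be unique even for singular $W$. You could get the uniqueness in your framework too, again without Slater. For every $Z$ feasible for \cref{eq:primal_sdp}, $\langle\operatorname{diag}(Q,R),Z\rangle=\langle P^\star,W\rangle+\langle U(P^\star),Z\rangle$. Hence any SDP optimizer satisfies $\langle U(P^\star),Z\rangle=0$, and the paper's rank argument applies verbatim.

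One correction: your stated reason for avoiding Slater is off. Under \cref{assumption:stabilizable}, a point with $U(P)\succ0$ does exist (\cref{lemma:dual-strict-feasibility}); your argument simply does not need it.
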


\vspace{1pt}

The strong duality only requires the basic stabilizability and detectability assumptions and positive semidefinite $W=\mathbb{E}[x_0x_0^\tr]$ for both time domains,
while gradient dominance
\cite{fazel2018global,mohammadi2019global} requires a much stronger assumption on $W$ and has a different statement depending on the time domain \cite{watanabe2026gradient}.
Our recent work~\cite{watanabe2025revisiting} focused on the continuous-time LQR, and earlier discussions can be found in \cite{balakrishnan2003semidefinite,lee2018primal,bamieh2024linear,zhao2023global,yao2001primal}. In contrast, \Cref{theorem:strong_duality} applies to both continuous-time and discrete-time LQRs, since \cref{eq:primal_KX} and \cref{eq:dual_P} encompass both time domains via the operator $\Psi$. Moreover, our proof
provides a unified treatment by exploiting the property of $\Psi$.

We present a proof sketch for \cref{theorem:strong_duality}; full details are in \cref{section:proof_duality}. We first introduce a
rank-constrained SDP by introducing a new matrix variable, and dropping the rank constraint yields an SDP relaxation of \cref{eq:primal_KX}.
This procedure is similar to Shor's SDP relaxation for QCQPs \cite{shor1987quadratic}.
We then show that
the SDP relaxation
not only shares the same dual in \cref{eq:dual_P} but also 
is tight, i.e.,
it preserves the same optimal value as \cref{eq:primal_KX}.
This is ensured by checking strong duality and solving the KKT equation for the SDPs. One notable feature is that the KKT solution for the primal and dual SDPs must satisfy the original rank constraint, which ensures \cref{theorem:strong_duality}.

\vspace{1pt}

In addition to the strong duality, we can further characterize the optimal solutions to primal and dual LQRs.
\begin{proposition} \label{proposition:optimal-solutions}
Consider the primal-dual problems \cref{eq:primal_KX,eq:dual_P}. Suppose  \cref{assumption:stabilizable} holds. We partition the matrix $U(P)$ in \cref{eq:Riccati-inequality} as
\begin{equation}
\label{eq:ARI-U}
U(P) =  \begin{bmatrix}
        U_{11}(P)& U_{12}(P)\\
        U_{12}^\tr(P) &U_{22}(P)
    \end{bmatrix}.
\end{equation}
Then, the following statements hold. 
    \begin{enumerate}
    \item An optimal solution to
    \cref{eq:dual_P} is given by the unique stabilizing solution $P^\star \succeq 0$ to the ARE:
    \begin{equation}\label{eq:ARE_U}
    U_{11}(P) - U_{12}(P)U_{22}^{-1}(P)U_{12}^\tr(P)
    = 0.
\end{equation}
        \item An optimal solution $(K^\star,X^\star)$ to \cref{eq:primal_KX} is given by
\begin{equation}\label{eq:Kstar_U}
    K^\star = - U_{22}^{-1}(P^\star)U^{\tr}_{12}(P^\star),
\end{equation}
and $X^\star$ that solves the Lyapunov equation \cref{eq:LQR_unif_Lyapunov} for $K^\star$. 
    \end{enumerate}
    Moreover, when $W\succ0$, the solutions above are the only ones. 
\end{proposition}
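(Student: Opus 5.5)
The plan is to exhibit a primal-feasible pair and a dual-feasible $P$ whose objective values coincide. By weak duality $p^\star\ge d^\star$, so equal values force both to be optimal. Uniqueness for $W\succ 0$ is handled at the end with a complementary-slackness argument.

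\emph{Dual feasibility of $P^\star$.} Under \cref{assumption:stabilizable}, classical Riccati theory gives a unique stabilizing solution $P^\star\succeq0$ of \cref{eq:ARE_U}. Since $R\succ0$ and $P^\star\succeq0$, we have $U_{22}(P^\star)\succ0$ in both cases of \cref{eq:Riccati-inequality_explicit}: it equals $R$ in CT and $R+B^\tr P^\star B$ in DT. The Schur complement in \cref{eq:ARE_U} vanishes, so $U(P^\star)\succeq0$ and $P^\star$ is feasible for \cref{eq:dual_P}. Define $K^\star$ by \cref{eq:Kstar_U}. A direct computation gives the factorization
\[
U(P^\star)=\begin{bmatrix} -K^{\star\tr}\\ I\end{bmatrix}U_{22}(P^\star)\begin{bmatrix} -K^{\star\tr}\\ I\end{bmatrix}^\tr .
\]
It follows that $\begin{bmatrix} I\\ K^\star\end{bmatrix}^\tr U(P^\star)\begin{bmatrix} I\\ K^\star\end{bmatrix}=0$. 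Because $P^\star$ is stabilizing, $A+BK^\star$ is Hurwitz or Schur, so \cref{eq:LQR_unif_Lyapunov} has a unique solution $X^\star\succeq0$. Hence $(K^\star,X^\star)$ is primal feasible.

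\emph{Zero gap.} Evaluate the Lagrangian in the form \cref{eq:Lagrangian_transform} at $(K^\star,X^\star,P^\star)$. The constraint residual vanishes, so $L$ equals the primal objective. The second term vanishes by the identity above, so $L=\langle P^\star,W\rangle$. The primal objective at $(K^\star,X^\star)$ therefore equals $d(P^\star)$, and weak duality gives optimality of both, with $p^\star=d^\star$ (consistent with \cref{theorem:strong_duality}).

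\emph{Uniqueness when $W\succ0$.} Let $(K,X)$ be primal optimal and $P$ dual optimal. The zero gap gives $\langle U(P),[I;K]X[I;K]^\tr\rangle=0$. Since $W\succ0$, feasibility forces $X\succ0$ and $A+BK$ stable: in DT, $X\succeq W\succ0$ and a Lyapunov argument applies; in CT, a standard inertia argument applies. Therefore $[I;K]^\tr U(P)[I;K]=0$. This implies $U_{22}(P)K+U_{12}^\tr(P)=0$, hence $K=-U_{22}^{-1}U_{12}^\tr$ once $U_{22}(P)\succ0$. It also gives that $P$ satisfies the ARE with $A+BK$ stable, i.e.\ $P$ is the stabilizing solution $P^\star$, and then $K=K^\star$ and $X=X^\star$. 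Separately, any dual optimal $P$ satisfies $U(P)\succeq0$, which is an ARI, so $P\preceq P^\star$ by the classical maximality of the stabilizing solution. Then $\langle P,W\rangle=\langle P^\star,W\rangle$ with $W\succ0$ forces $P=P^\star$ directly, which also secures $U_{22}(P)\succ0$.

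The main obstacle is the uniqueness step. It needs the maximality of $P^\star$ among ARI solutions in both time domains, and the proof that $W\succ0$ forces any feasible $K$ to be stabilizing. I would first try to prove both uniformly through $\Psi$ using the Lyapunov identity $\langle P^\star-P,W\rangle=\langle \Psi^*(P^\star-P),\cdot\rangle$. If that fails, I would cite \cite{lancaster1995algebraic} separately for CT and DT.
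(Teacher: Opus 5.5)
Your proposal is correct, but it takes a genuinely different route from the paper.

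\textbf{Your route.} You give a direct primal--dual certificate. You construct $P^\star$ and $(K^\star,X^\star)$ and use the factorization $U(P^\star)=[-K^{\star\tr};I]\,U_{22}(P^\star)\,[-K^{\star\tr};I]^\tr$ to get $U(P^\star)[I;K^\star]=0$. You then close with weak duality through \cref{eq:Lagrangian_transform}.

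\textbf{The paper's route.} The paper goes through the Shor relaxation \cref{eq:primal_sdp}. It invokes SDP strong duality (Vandenberghe--Boyd, Th.~3.1), which needs two inputs. The first is finiteness of $d^\star$, obtained from the comparison theorem. The second is dual strict feasibility (\cref{lemma:dual-strict-feasibility}); its discrete-time proof requires the implicit-function perturbation of the DARE. From there it obtains an SDP minimizer $Z^\star$ and extracts $K^\star$ from $\mathrm{rank}(Z^\star)\le n$ together with complementary slackness.

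\textbf{What each buys.} Your argument is more economical. The existence part needs only a stabilizing solution $P^\star\succeq0$, with no maximality and no strict feasibility. It also yields \cref{theorem:strong_duality} at no extra cost, and tightness of the relaxation via $d^\star\le p^\star_{\sdp}\le p^\star$. The price is that the ARE and the form of $K^\star$ must be known in advance. The paper's rank argument derives $K^\star$ without that guess, which is the point of its final remark. It also proves uniqueness of the SDP optimum $Z^\star$ even when $W\nsucc0$.

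\textbf{Uniqueness when $W\succ0$.} Both arguments use maximality of $P^\star$ for dual uniqueness. For the primal, the paper relies on uniqueness of $Z^\star$ plus the feasible-set equivalence of \cref{appendix:proof-equivalence}. You argue directly from $X\succ0$ and slackness, which is shorter.

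\textbf{A presentational point.} Your uniqueness paragraph states a conclusion conditional on $U_{22}(P)\succ0$ before that condition is established. It is cleaner to plug the dual optimum $P^\star$ itself into the slackness identity. Then $U_{22}(P^\star)\succ0$ is already known, $K=K^\star$ follows, and $X=X^\star$ by Lyapunov uniqueness for the stable $A+BK^\star$. With that order, stability of a general feasible $A+BK$ is never needed, only $X\succ0$.
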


\vspace{2pt}

This proposition reinterprets classical results on continuous-time and discrete-time LQRs \cite{zhou1996robust}. After substituting \cref{eq:Riccati-inequality_explicit} into \cref{eq:ARI-U}, the ARE \cref{eq:ARE_U} is indeed in the familiar form 
\begin{equation}\label{eq:ARE_duality}
      \!\!\! \!\!\!  \begin{cases}
            A^\tr P+PA+Q- PBR^{-1}B^\tr P=0
            &\text{(CT)}
            \\
            A^\tr PA-P +Q\\
            \quad-A^\tr PB(R+B^\tr PB)^{-1} B^\tr PA=0,
            &\text{(DT)}
        \end{cases}
    \end{equation}
and the optimal LQR gain \cref{eq:Kstar_U} becomes 
\begin{equation}\label{eq:Kstar_duality}
            K^\star =
            \begin{cases}
                - R^{-1}B^\tr P^\star\in\mathcal{K}_\ct & \text{(CT)}\\
                -(R+B^\tr P^\star B)^{-1}B^\tr P^\star A\in \mathcal{K}_\dt. &
                \text{(DT)}
            \end{cases}
        \end{equation}
        
In other words, the usual Riccati equation \cref{eq:ARE_duality} arises from the Schur complement of the block matrix $U(P)$. We can obtain a pair of primal optimal solutions $(K^\star,X^\star)$ from the dual optimal solution $P^\star$.    
One relatively less emphasized point is that the solution to the primal LQR problem \cref{eq:primal_KX} may not be unique, unless $W \succ 0$. Explicit continuous-time and discrete-time LQR instances can be found in \cite[Examples 3 and 4]{watanabe2026gradient}. We provide a proof of \Cref{proposition:optimal-solutions} in \cref{section:proof_duality}. 

%% file: arXiv-sections/section_iv.tex
\section{Proof of strong duality via rank-constrained SDP Relaxations}\label{section:proof_duality}

Here, we establish \cref{theorem:strong_duality,proposition:optimal-solutions} by introducing
a rank-constrained SDP for \cref{eq:primal_KX} together with careful primal and dual analysis.
\subsection{Rank-constrained SDP and its Shor's relaxation}
\label{subsection:proof-main-theorem}

We here derive a rank-constrained SDP form for \cref{eq:primal_KX}. Dropping the rank constraint directly leads to a standard SDP, which turns out to be lossless.
This process is analogous to the well-known Shor's relaxation procedure for QCQPs. We illustrate this with a simple example.  
\begin{example}\label{example:QCQP}
Consider a homogeneous QCQP:
\begin{align*}
    \min_{s\in \mathbb{R}^n} \quad & s^\tr A_0s 
    \\
    \text{subject to}\quad&
    s^\tr A_is 
    +c_i\leq 0,\, i= 1, \ldots, p, 
\end{align*}
where $A_0,A_1\ldots,A_{p}\in\mathbb{S}^n$. 
This QCQP problem is generally nonconvex. 
Now, let us introduce a new matrix variable $S=ss^\tr \in \mathbb{S}^n$, which 
is equivalent to $\operatorname{rank}(S)=1$ and $S\succeq 0$.
Thus, the QCQP problem can be equivalently rewritten as
\begin{align*}
\begin{aligned}
    \min_{S\in \mathbb{S}^n} \quad & 
    \langle A_0,S\rangle
    \\
    \text{subject to}\quad&
    \langle A_i,S\rangle
    +c_i\leq 0,\,i = 1, \ldots, p, \\
    \quad& S \succeq 0,\quad \operatorname{rank}(S)=1.
    \end{aligned}
\end{align*}
Since the nonconvexity is captured by $\operatorname{rank}(S)=1$,
dropping this rank constraint leads to a standard SDP. This process is also known as Shor's relaxation.
\hfill$\square$
\end{example}

We notice that \cref{eq:primal_KX} can be viewed as a QCQP  with 
a quadratic coupling in the matrix variable 
\begin{equation} \label{eq:matrix-varaible-V}
V=
    \begin{bmatrix}
        I\\
        K
    \end{bmatrix}X^{1/2} \in \mathbb{R}^{(n + m)\times n}.
\end{equation}
Indeed, we have $\langle Q+K^\tr RK,X\rangle
    =
    \langle Q,X\rangle+\langle R,KXK^\tr \rangle
    =
    \left\langle 
       \operatorname{diag}(Q,R),VV^\tr
    \right\rangle$ for the cost, and 
\begin{align*}
  \begin{bmatrix}
       A+BK\\
       I
   \end{bmatrix}
   X
   \begin{bmatrix}
       A+BK\\
       I
   \end{bmatrix}^\tr
   = \begin{bmatrix}
       A & B\\
       I &0
   \end{bmatrix}VV^\tr
    \begin{bmatrix}
       A & B\\
       I &0
   \end{bmatrix}^\tr.
\end{align*}
This implies that 
the LQR problem in \cref{eq:primal_KX} may be viewed as
a \textit{Matrix Quadratic Programming} (MQP) \cite{beck2012new}, which involves a quadratic cost and constraints in matrices. 

Similar to \Cref{example:QCQP}, we introduce a new variable 
$Z=VV^\tr \in \mathbb{S}^{n + m}_+$
from the matrix variable $V$ in \cref{eq:matrix-varaible-V} with $(K,X)\in \mathbb{R}^{m\times n}\times \mathbb{S}_{+}^n$. By definition, we have 
$Z\in \mathbb{S}_{+}^{m+n}$  and $\operatorname{rank}(Z)\leq n$.
We can thus relax \cref{eq:primal_KX} into the following rank-constrained SDP:
\begin{equation}\label{eq:primal_rank}
   \begin{aligned}
   \min_{Z\succeq 0}
    &\quad 
    \left\langle
    \operatorname{diag}(Q,R),Z
    \right\rangle 
    \\
    \text{subject to}&\quad
    \Psi
   \left(
   \begin{bmatrix}
       A&B\\
       I&0
   \end{bmatrix}
   Z
   \begin{bmatrix}
       A&B\\
       I&0
   \end{bmatrix}^\tr
   \right)
    +W = 0, \\
    &\quad
    \operatorname{rank}(Z)\leq n.
\end{aligned}
\end{equation}
This matrix $Z$ may be interpreted as the following covariance matrix \cite{skelton1997unified}:
\begin{equation*}
    Z = \mathbb{E}\left[\int_0^\infty \begin{bmatrix}
        x(t)\\
        u(t)
    \end{bmatrix}
    \begin{bmatrix}
        x(t)\\
        u(t)
    \end{bmatrix}^\tr
    dt\right]
    \quad\text{or}\quad
    \mathbb{E}\left[\sum_{t=0}^\infty \begin{bmatrix}
        x_t\\
        u_t
    \end{bmatrix}
    \begin{bmatrix}
        x_t\\
        u_t
    \end{bmatrix}\right].
\end{equation*}

Unless $W\succ 0$, \cref{eq:primal_rank} is generally 
not an exact reformulation of \cref{eq:primal_KX}
since the matrix $V$ in \cref{eq:matrix-varaible-V} has a further structure.\footnote{
When $W\succ0$, the feasible regions of \cref{eq:primal_KX} and \cref{eq:primal_rank} become equivalent;
see \cref{appendix:proof-equivalence} for the details.
}
As $\Psi$ is a linear operator,
the nonconvexity of \cref{eq:primal_rank} solely arises from $\operatorname{rank}(Z)\leq n$.
Hence, 
dropping the rank constraint gives a convex SDP relaxation for the primal LQR \cref{eq:primal_KX}: 
\begin{equation}\label{eq:primal_sdp}
     \begin{aligned}
   p_\texttt{sdp}^\star=\min_{Z\succeq 0}
    &\quad 
    \left\langle
    \operatorname{diag}(Q,R),Z
    \right\rangle
    \\
    \text{subject to}&\quad
    \Psi
   \left(
   \begin{bmatrix}
       A&B\\
       I&0
   \end{bmatrix}
   Z
   \begin{bmatrix}
       A&B\\
       I&0
   \end{bmatrix}^\tr
   \right)
    +W = 0,
\end{aligned}
\end{equation}

Interestingly, the Lagrange dual of this SDP \cref{eq:primal_sdp} is the same as \cref{eq:dual_P}.
We have the following result. 
\begin{proposition}\label{proposition:dual-sdp}
Consider the primal and dual LQR problems \cref{eq:primal_KX,eq:dual_P}, and the SDP relaxation \cref{eq:primal_sdp}.
Then, 1) \cref{eq:dual_P} is a Lagrange dual problem of \cref{eq:primal_sdp}; 2)
their optimal values satisfy 
$
    d^\star\leq p_\texttt{sdp}^\star \leq p^\star.
$
\end{proposition}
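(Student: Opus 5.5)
We prove part 1) by forming the Lagrangian of the SDP relaxation \cref{eq:primal_sdp} directly. We then prove part 2) by combining weak duality for \cref{eq:primal_sdp} with the fact that every feasible point of \cref{eq:primal_KX} gives a feasible point of \cref{eq:primal_sdp} with the same cost.

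For part 1), write $M:=\begin{bmatrix} A&B\\ I&0\end{bmatrix}$. We dualize only the equality constraint, with multiplier $P\in\mathbb{S}^n$, and keep the cone constraint $Z\succeq0$ in the inner minimization. The Lagrangian is
\begin{equation*}
\mathcal{L}(Z,P)=\langle \operatorname{diag}(Q,R),Z\rangle+\langle P,\Psi(MZM^\tr)+W\rangle .
\end{equation*}
By the definition of the adjoint $\Psi^*$ in \cref{eq:adjoint_Psi}, $\langle P,\Psi(MZM^\tr)\rangle=\langle M^\tr\Psi^*(P)M,Z\rangle$. Together with the definition \cref{eq:Riccati-inequality}, this gives
\begin{equation*}
\mathcal{L}(Z,P)=\langle P,W\rangle+\langle U(P),Z\rangle .
\end{equation*}
The dual function is $g(P)=\inf_{Z\succeq0}\mathcal{L}(Z,P)$. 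This case is simpler than \cref{lemma:dual}, because $Z$ ranges over the whole cone $\mathbb{S}^{n+m}_+$ and is not restricted to the structured set $\{[I;K]X[I;K]^\tr\}$. We split into two cases:
\begin{itemize}
\item If $U(P)\succeq0$, then $\langle U(P),Z\rangle\ge0$ for every $Z\succeq0$, with equality at $Z=0$. Hence $g(P)=\langle P,W\rangle$.
\item If $U(P)\not\succeq0$, there is a vector $v$ with $v^\tr U(P)v<0$. Taking $Z=t\,vv^\tr$ and letting $t\to\infty$ gives $g(P)=-\infty$.
\end{itemize}
Therefore $\sup_P g(P)$ is exactly \cref{eq:dual_P}, which proves part 1).

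For part 2), the inequality $d^\star\le p^\star_\texttt{sdp}$ is weak duality for the pair \cref{eq:primal_sdp} and \cref{eq:dual_P}, now identified as a Lagrange primal–dual pair. Explicitly, for any feasible $Z$ and any $P$ with $U(P)\succeq0$, the equality constraint makes the penalty term vanish, so
\begin{equation*}
\langle \operatorname{diag}(Q,R),Z\rangle=\mathcal{L}(Z,P)=\langle P,W\rangle+\langle U(P),Z\rangle\ge\langle P,W\rangle .
\end{equation*}
Taking the infimum over feasible $Z$ and the supremum over dual-feasible $P$ gives $d^\star\le p^\star_\texttt{sdp}$. This also covers the degenerate values $\pm\infty$ under the usual conventions.

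For $p^\star_\texttt{sdp}\le p^\star$, take any feasible $(K,X)$ of \cref{eq:primal_KX} and set $Z=VV^\tr$ with $V$ as in \cref{eq:matrix-varaible-V}. Then $Z\succeq0$. By the two identities in \cref{eq:two-identities}, $Z$ satisfies the constraint of \cref{eq:primal_sdp} and attains the same cost $\langle Q+K^\tr RK,X\rangle$. So the feasible set of \cref{eq:primal_KX} maps into that of \cref{eq:primal_sdp} (indeed into that of \cref{eq:primal_rank}, since $\operatorname{rank}(Z)\le n$) without changing the objective. Taking the infimum gives $p^\star_\texttt{sdp}\le p^\star$.

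No step is a real obstacle. The only points needing care are the adjoint identity, which is already available from \cref{eq:adjoint_Psi}, and making sure the sign convention of the multiplier matches \cref{lemma:dual}, so that the resulting dual is literally \cref{eq:dual_P} and not its negation.
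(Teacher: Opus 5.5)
Your proposal is correct and takes the same route as the paper: compute the Lagrangian of the SDP relaxation using the adjoint $\Psi^*$ to obtain $\langle P,W\rangle+\langle U(P),Z\rangle$, apply weak duality, and embed each feasible $(K,X)$ as the feasible point $Z=\begin{bmatrix} I\\ K\end{bmatrix}X\begin{bmatrix} I\\ K\end{bmatrix}^\tr$ of the relaxation, which has the same cost. You have written out explicitly the computations that the paper only calls straightforward.
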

\begin{proof}
The first statement follows from a straightforward computation of the Lagrangian of \Cref{eq:primal_sdp}. As for the second statement, $d^\star\leq p_\texttt{sdp}^\star$ follows from standard weak duality in SDPs, and $p_\texttt{sdp}^\star \leq p^\star$ is valid from the rank relaxation. 
\end{proof}

\subsection{Proofs of \cref{theorem:strong_duality,proposition:optimal-solutions}}

It turns out that the SDP relaxation \cref{eq:primal_sdp} and the dual LQR \cref{eq:dual_P} also satisfy strong duality, and we can further explicitly characterize their primal-dual optimal solutions. 
\begin{lemma}\label{lemma:strong-duality-SDP}
Consider the SDP relaxation \cref{eq:primal_sdp} and the dual LQR \cref{eq:dual_P}. Suppose \cref{assumption:stabilizable} holds.
Then, we have: 
\begin{enumerate}
    \item The duality gap between \cref{eq:primal_sdp} and \cref{eq:dual_P} is zero, i.e., strong duality holds with 
    $p_{\sdp}^\star=d^\star$.
    \item An optimal dual solution to
    \cref{eq:dual_P} is given by the unique stabilizing solution $P^\star \succeq 0$
    to the ARE \cref{eq:ARE_U}.
    \item The optimal primal solution to \cref{eq:primal_sdp} is unique with rank at most $n$, and it admits the representation
    \begin{equation}\label{eq:Zstar}
        Z^\star =\begin{bmatrix}
            I\\
            K^\star +\Delta
        \end{bmatrix}X^\star 
        \begin{bmatrix}
            I\\
            K^\star+\Delta
        \end{bmatrix}^\tr,
    \end{equation}
    where $ K^\star$ is given in \cref{eq:Kstar_U}, $X^\star$ solves the Lyapunov equation \cref{eq:LQR_unif_Lyapunov} for $K^\star$,
    and $\Delta$ is any $m\times n$ matrix satisfying $\Delta X^\star=0$.
\end{enumerate}
When $W\succ0$, the solution $P^\star$ to \cref{eq:dual_P} is unique, and the factorization \cref{eq:Zstar} is unique with $\Delta=0$.
\end{lemma}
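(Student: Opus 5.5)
We will prove the lemma by exhibiting a primal-dual pair for \cref{eq:primal_sdp} and \cref{eq:dual_P} that satisfies the KKT conditions: primal feasibility, dual feasibility $U(P)\succeq0$, and complementary slackness $\langle U(P),Z\rangle=0$. For any feasible $Z$ and any $P$ with $U(P)\succeq0$, the Lagrangian identity used in \cref{proposition:dual-sdp} (the analogue of \cref{eq:Lagrangian_transform}) gives
\[
\langle \operatorname{diag}(Q,R),Z\rangle-\langle P,W\rangle=\langle U(P),Z\rangle\ge 0 .
\]
Hence a feasible pair with zero gap is optimal for both problems, which proves item 1.

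\textbf{Construction of the KKT pair.} Under \cref{assumption:stabilizable}, classical Riccati theory (stabilizability and detectability in the respective time domain) gives a unique stabilizing solution $P^\star\succeq0$ of the ARE \cref{eq:ARE_duality}. We note that $U_{22}(P^\star)\succ0$: it equals $R$ in CT and $R+B^\tr P^\star B$ in DT. With $K^\star$ defined by \cref{eq:Kstar_U}, which is stabilizing, the Schur-complement factorization reads
\[
U(P^\star)=\begin{bmatrix}-K^{\star\tr}\\ I\end{bmatrix}U_{22}(P^\star)\begin{bmatrix}-K^{\star}& I\end{bmatrix}.
\]
So $U(P^\star)\succeq0$, and its kernel is exactly the range of $\begin{bmatrix}I\\ K^\star\end{bmatrix}$. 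Let $X^\star\succeq0$ solve the Lyapunov equation for $K^\star$, and set $Z^\star=\begin{bmatrix}I\\K^\star\end{bmatrix}X^\star\begin{bmatrix}I\\K^\star\end{bmatrix}^\tr$. This $Z^\star$ is feasible by \cref{eq:two-identities} and satisfies $U(P^\star)Z^\star=0$. This yields items 1 and 2 together with the existence part of item 3.

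\textbf{Uniqueness of $Z^\star$ (the main obstacle).} Let $Z$ be any optimal solution. Zero gap against $P^\star$ forces $\langle U(P^\star),Z\rangle=0$, and since both matrices are PSD this gives $U(P^\star)Z=0$. So the range of $Z$ lies in the range of $\begin{bmatrix}I\\K^\star\end{bmatrix}$, and therefore
\[
Z=\begin{bmatrix}I\\K^\star\end{bmatrix}Y\begin{bmatrix}I\\K^\star\end{bmatrix}^\tr \quad\text{for some } Y\succeq0,
\]
which already gives $\operatorname{rank} Z\le n$. Substituting into the equality constraint, $Y$ must solve the Lyapunov equation \cref{eq:LQR_unif_Lyapunov} for $K^\star$. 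Because $A_{K^\star}$ is Hurwitz (CT) or Schur (DT), that equation has a unique solution, so $Y=X^\star$ and $Z=Z^\star$.

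\textbf{The $\Delta$-parametrization and the case $W\succ0$.} The representation \cref{eq:Zstar} follows because $(K^\star+\Delta)X^\star=K^\star X^\star$ whenever $\Delta X^\star=0$, so every such $\Delta$ produces the same $Z^\star$. Conversely, if $Z^\star=\begin{bmatrix}I\\K\end{bmatrix}X\begin{bmatrix}I\\K\end{bmatrix}^\tr$, comparing the $(1,1)$ blocks gives $X=X^\star$, and comparing the $(2,1)$ blocks gives $KX^\star=K^\star X^\star$, i.e.\ $\Delta X^\star=0$.

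When $W\succ0$, the stability of $A_{K^\star}$ gives $X^\star\succ0$, so $\Delta=0$. For dual uniqueness, let $P$ be any optimal dual solution. Zero gap with $Z^\star$ gives $U(P)Z^\star=0$, and $X^\star\succ0$ then yields $U(P)\begin{bmatrix}I\\K^\star\end{bmatrix}=0$. From this we conclude
\[
\begin{bmatrix}I\\K^\star\end{bmatrix}^\tr U(P)\begin{bmatrix}I\\K^\star\end{bmatrix}=0,
\]
which is the closed-loop Lyapunov equation $\Psi$-form $A_{K^\star}^\tr P+PA_{K^\star}+Q+K^{\star\tr}RK^\star=0$ (respectively its DT analogue). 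Since $P^\star$ satisfies the same equation and $A_{K^\star}$ is stable, the solution is unique and $P=P^\star$.

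The delicate points are the kernel characterization of $U(P^\star)$, which requires $U_{22}(P^\star)\succ0$, and invoking the classical existence of the stabilizing ARE solution in both time domains.
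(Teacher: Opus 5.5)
Your proof is correct. For items 1--2 and for dual uniqueness, though, it takes a genuinely different route from the paper.

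The paper proves $p^\star_{\mathrm{sdp}}=d^\star$ abstractly:
\begin{enumerate}
\item the comparison theorem (Loewner-maximality of the stabilizing solution over $\{P: U(P)\succeq0\}$) gives $|d^\star|<\infty$ and optimality of $P^\star$;
\item a separate lemma gives strict dual feasibility $U(P_+)\succ0$, in discrete time via the implicit-function perturbation argument in the appendix;
\item the Vandenberghe--Boyd theorem then yields zero gap and primal attainment.
\end{enumerate}
Only afterwards is the form of $Z^\star$ read off from complementary slackness.

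You instead exhibit an explicit zero-gap certificate. The factorization $U(P^\star)=\begin{bmatrix}-K^\star & I\end{bmatrix}^{\top}U_{22}(P^\star)\begin{bmatrix}-K^\star & I\end{bmatrix}$ gives dual feasibility and identifies $\ker U(P^\star)$. The Lyapunov solution $X^\star$ gives a feasible $Z^\star$ inside that kernel, and weak duality closes the argument. This uses only the existence of a stabilizing ARE solution with $U_{22}(P^\star)\succ0$. Maximality, strict dual feasibility (Slater), and the general SDP duality theorem all become unnecessary.

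Your uniqueness argument for $Z^\star$ is essentially the paper's: slackness forces the range of $Z$ into $\ker U(P^\star)$, and then the Lyapunov solution is unique. You phrase it through kernels instead of the rank count and Frobenius-norm factorization.

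For dual uniqueness under $W\succ0$, you use slackness against $Z^\star$, whose range is all of $\operatorname{range}\begin{bmatrix}I\\ K^\star\end{bmatrix}$ since $X^\star\succ0$. This forces any optimal $P$ to solve the same closed-loop Lyapunov equation that $P^\star$ solves. The paper instead argues via $P^\star\succeq\hat P$ and $\operatorname{Tr}((P^\star-\hat P)W)=0$. You also check the $\Delta$-parametrization in both directions, which the paper only asserts.

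What the paper's route buys is extra structure beyond the lemma, namely maximality of $P^\star$ and strict dual feasibility. It also derives strong duality without starting from a primal candidate. Your $K^\star$ is likewise read off from $U(P^\star)$ rather than guessed, so for the lemma as stated your proof is the more economical and self-contained one.
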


This result follows from the celebrated comparison theorem for (continuous- and discrete- time) AREs \cite{lancaster1995algebraic} and
the KKT condition \cite{vandenberghe1996semidefinite}.
While  \cite[Th. 1]{yao2001primal} established a similar result for $W=I$, this lemma further ensures that solution $Z^\star$ to the primal SDP \cref{eq:primal_sdp} is unique even for $W\nsucc0$.
We provide the proof details in \cref{subsection:proof-sdp-duality}.

We are now ready to prove 
\cref{theorem:strong_duality,proposition:optimal-solutions}.

\textit{Proof of \cref{theorem:strong_duality}}.
Recall that from \cref{proposition:dual-sdp} 
the optimal value of \cref{eq:primal_KX} satisfies
$p^\star_{\sdp}\leq p^\star$.
Since an optimal solution $Z^\star$ to \cref{eq:primal_sdp} is of the form \cref{eq:Zstar}, we have
\begin{equation} \label{eq:upper-bound-feasibility}
  \!  p^\star \leq 
    \langle Q+(K^\star)^\tr RK^\star ,X^\star\rangle
    \!=\!\langle \mathrm{diag}(Q,R),Z^\star\rangle
    =p^\star_{\sdp}
\end{equation}
where we use $(K^\star ,X^\star)$ as a feasible solution to \cref{eq:primal_KX}. 
Thus, \cref{eq:primal_sdp} is tight, i.e., $p^\star =  p^\star_\texttt{sdp}$. 
Combining this with $p^\star_\texttt{sdp}=d^\star$,
we conclude the strong duality between \cref{eq:primal_KX} and \cref{eq:dual_P}, i.e., 
$ p^\star = d^\star.$ \hfill$\square$

\vspace{1pt}
\textit{Proof of \cref{proposition:optimal-solutions}}. The first statement of  \cref{proposition:optimal-solutions} is the same as the second statement of \cref{lemma:strong-duality-SDP}. The second statement of \cref{proposition:optimal-solutions} essentially follows from \cref{eq:upper-bound-feasibility}, as the feasible point $(K^\star,X^\star)$ achieves the optimal value $p^\star$. This pair $(K^\star ,X^\star)$ is an optimal solution to \cref{eq:primal_KX}. When $W\succ 0$, 
the dual optimal solution is unique by \cref{lemma:strong-duality-SDP}.
Moreover, we also have $X^\star\succ0$, which implies that $\Delta$ satisfying $\Delta X^\star=0$ is $0$.
Then, the factorization of $Z^\star$ in \cref{eq:Zstar} is unique, leading to the uniqueness of $(K^\star,X^\star)$ by the construction of \cref{eq:primal_sdp}.
\hfill$\square$

\begin{remark}
For nonconvex QCQPs with a single inequality constraint,
we have the famous \textit{S-lemma}, i.e.,
the strong duality and tightness of Shor's SDP relaxation hold 
    under a mild assumption.
    Our analysis shows that \cref{eq:primal_KX} can be viewed as an MQP \cite{beck2012new}.
    One interesting
    direction is 
    to address more advanced nonconvex control problems
    from an MQP perspective (e.g., using an extension of S-lemma \cite{beck2012new}).
    \hfill
    $\square$
\end{remark}

\subsection{Proof of \cref{lemma:strong-duality-SDP}}\label{subsection:proof-sdp-duality}

We finally prove the three statements in \cref{lemma:strong-duality-SDP}. This requires classical results in continuous- and discrete-time AREs, as well as careful KKT analysis between \cref{eq:primal_sdp} and~\cref{eq:dual_P}.  

We first introduce two technical lemmas on the ARE \cref{eq:ARE_duality}. 
\begin{lemma}[Comparison theorem {\cite[Th. 9.1.1, Th. 13.1.1]{lancaster1995algebraic}}]\label{lemma:comparison}
Suppose \cref{assumption:stabilizable} holds.
Then, the stabilizing solution $P^\star$ to the ARE \cref{eq:ARE_duality} is unique and maximal, i.e., it satisfies
    $P^\star\succeq P$
for any $P\in\mathbb{S}^n$ satisfying $U(P)\succeq0$. This stabilizing solution $P^\star$ is also positive semidefinite. 
\end{lemma}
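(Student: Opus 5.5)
The plan is to split the statement into three parts: existence of a stabilizing solution, its maximality (which gives uniqueness at once), and its positive semidefiniteness. Everything will be expressed through the matrix $U(P)$ in \cref{eq:Riccati-inequality}, so that both time domains are handled by a single computation.

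\textbf{Existence.} I would take this from the classical theory \cite{lancaster1995algebraic,zhou1996robust}. Under stabilizability of $(A,B)$ and detectability of $(Q^{1/2},A)$, there is a $P^\star$ solving \cref{eq:ARE_duality} with $U_{22}(P^\star)\succ0$ and with $K^\star$ from \cref{eq:Kstar_U} stabilizing. If a self-contained argument were wanted, I would build $P^\star$ as the monotone limit of Kleinman/Hewer policy iteration started from a stabilizing gain. Proving convergence of that iteration, and proving that the limit is stabilizing using only detectability, is where I expect the real difficulty. For that reason I would rather cite this step than reprove it.

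\textbf{Key identity.} Completing the square in $U(P)$ gives, for any $P$ with $U_{22}(P)\succ0$, any $K$, and $K_P:=-U_{22}^{-1}(P)U_{12}^\tr(P)$,
\[
\begin{bmatrix} I\\ K\end{bmatrix}^\tr U(P)\begin{bmatrix} I\\ K\end{bmatrix}
= U_{11}-U_{12}U_{22}^{-1}U_{12}^\tr + (K-K_P)^\tr U_{22}(K-K_P).
\]
By \cref{eq:Riccati-inequality} and the adjoint relation \cref{eq:adjoint_Psi}, the left-hand side also equals
\[
\mathcal{L}_K(P)+Q+K^\tr RK,
\]
where $\mathcal{L}_K(P)=A_K^\tr P+PA_K$ in continuous time and $\mathcal{L}_K(P)=A_K^\tr PA_K-P$ in discrete time. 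The second expression holds for every $P$, with no condition on $U_{22}$.

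\textbf{Maximality.} Take any $P$ with $U(P)\succeq0$ and evaluate the identity at $K=K^\star$.

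For $P^\star$ itself, the Schur complement vanishes by \cref{eq:ARE_duality} and $K^\star=K_{P^\star}$. Hence
\[
\mathcal{L}_{K^\star}(P^\star)+Q+K^{\star\tr}RK^\star=0.
\]

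For the given $P$, the left-hand side of the identity is $[I;K^\star]^\tr U(P)[I;K^\star]\succeq0$. Subtracting the two relations and using linearity of $\mathcal{L}_{K^\star}$ yields
\[
\mathcal{L}_{K^\star}(P^\star-P) = -[I;K^\star]^\tr U(P)[I;K^\star]\preceq0 .
\]
Since $A_{K^\star}$ is Hurwitz (respectively Schur), this Lyapunov (respectively Stein) equation has the unique solution
\[
P^\star-P=\int_0^\infty e^{A_{K^\star}^\tr t}\,M\,e^{A_{K^\star}t}\,dt \quad\text{or}\quad \sum_{k\ge0}(A_{K^\star}^\tr)^k M A_{K^\star}^k,
\]
with $M=[I;K^\star]^\tr U(P)[I;K^\star]\succeq0$. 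Therefore $P^\star\succeq P$.

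For this to cover $P=P^\star$, I also need $U(P^\star)\succeq0$. This holds because $U_{22}(P^\star)\succ0$ and its Schur complement is zero.

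\textbf{Uniqueness.} Any two stabilizing solutions satisfy $U\succeq0$ by the previous point, so each dominates the other and they are equal.

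\textbf{Positive semidefiniteness.} The relation $\mathcal{L}_{K^\star}(P^\star)=-(Q+K^{\star\tr}RK^\star)\preceq0$, together with stability of $A_{K^\star}$, gives $P^\star\succeq0$ by the same integral or series representation.
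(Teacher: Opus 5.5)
Your argument is correct, but it does not follow the paper's route. The paper gives no proof of this lemma: it imports it from Lancaster--Rodman, using Th.~9.1.1 for continuous time and Th.~13.1.1 for discrete time as two separate black boxes.

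You cite only the existence of a stabilizing solution $P^\star$ with $U_{22}(P^\star)\succ0$; the paper also takes this for granted, e.g., in the proof of \cref{lemma:dual-strict-feasibility}. You then derive maximality, uniqueness and $P^\star\succeq0$ from two ingredients:
\begin{itemize}
\item the single identity $[I;K]^\tr U(P)[I;K]=\mathcal{L}_K(P)+Q+K^\tr RK$, which is just \cref{eq:Lagrangian_transform} with $X$ stripped off;
\item the integral/series representation of the Lyapunov/Stein equation for the stable matrix $A_{K^\star}$.
\end{itemize}
This gives a self-contained comparison argument in which, as in the rest of the paper, the time domain enters only through $\Psi$ (via $\mathcal{L}_K$). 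It also makes explicit that maximality needs nothing beyond the ARE for $P^\star$, the condition $U(P)\succeq0$, and stability of $A_{K^\star}$. The citation, in exchange, also covers existence, which is the genuinely hard part, as you correctly identify. The price is that it treats the two time domains by different theorems.

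One step to tighten. For uniqueness you use $U(\tilde P)\succeq0$ for an arbitrary stabilizing solution $\tilde P$. You established $U_{22}\succ0$ only for the particular $P^\star$ supplied by the existence result. In continuous time this is automatic, since $U_{22}=R$. In discrete time, unless positivity of $R+B^\tr \tilde P B$ is built into your definition of a stabilizing solution, it follows from your own argument with the roles swapped:
\begin{itemize}
\item The completing-the-square identity needs only invertibility of $U_{22}(\tilde P)$. So with $\tilde K=K_{\tilde P}$ stabilizing, you get $\mathcal{L}_{\tilde K}(\tilde P)+Q+\tilde K^\tr R\tilde K=0$.
\item Since $U(P^\star)\succeq0$, this gives $\mathcal{L}_{\tilde K}(\tilde P-P^\star)\preceq0$, and hence $\tilde P\succeq P^\star$.
\item Therefore $U_{22}(\tilde P)\succeq U_{22}(P^\star)\succ0$, so $U(\tilde P)\succeq0$.
\item Maximality of $P^\star$ then gives $\tilde P=P^\star$.
\end{itemize}
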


\begin{lemma}\label{lemma:dual-strict-feasibility}
Suppose \cref{assumption:stabilizable} holds.
Then, there exists a matrix $P_+\in\mathbb{S}^n$ such that
$U(P_+)\succ0$.
\end{lemma}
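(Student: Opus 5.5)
We construct $P_+$ explicitly by perturbing a quantity that is already known to behave well, and we use only stabilizability of $(A,B)$. First fix a stabilizing gain $K_0$, so $K_0\in\mathcal{K}_\ct$ or $K_0\in\mathcal{K}_\dt$ depending on the time domain. The key observation is a congruence. Define the invertible matrix
\[
T=\begin{bmatrix} I & 0\\ K_0 & I\end{bmatrix}.
\]
Since $U(P)\succ0$ if and only if $T^\tr U(P) T\succ0$, it suffices to make the transformed matrix positive definite. Using $\begin{bmatrix} A & B\\ I & 0\end{bmatrix}T=\begin{bmatrix} A_{K_0} & B\\ I & 0\end{bmatrix}$, we get $T^\tr U(P)T$ explicitly. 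In continuous time,
\[
\begin{bmatrix} A_{K_0}^\tr P+PA_{K_0}+Q+K_0^\tr RK_0 & PB+K_0^\tr R\\ B^\tr P+RK_0 & R\end{bmatrix}.
\]
In discrete time, the $(1,1)$ block is $A_{K_0}^\tr PA_{K_0}-P+Q+K_0^\tr RK_0$, the $(1,2)$ block is $A_{K_0}^\tr PB+K_0^\tr R$, and the $(2,2)$ block is $R+B^\tr PB$.

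Next, choose $P=-\gamma Y$, where $Y\succ0$ solves the Lyapunov equation
\[
A_{K_0}^\tr Y+YA_{K_0}=-I \quad\text{(CT)}, \qquad A_{K_0}^\tr YA_{K_0}-Y=-I \quad\text{(DT)}.
\]
Such a $Y$ exists because $A_{K_0}$ is stable. With this choice, the $(1,1)$ block becomes $\gamma I+Q+K_0^\tr RK_0$, which is at least $\gamma I$.

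In continuous time the $(2,2)$ block is $R\succ0$, independent of $\gamma$. The off-diagonal block is $-\gamma YB+K_0^\tr R$, so it grows linearly in $\gamma$. The Schur complement condition then requires
\[
\gamma I+Q+K_0^\tr RK_0-(K_0^\tr R-\gamma YB)R^{-1}(RK_0-\gamma B^\tr Y)\succ0.
\]
This contains a $-\gamma^2 YBR^{-1}B^\tr Y$ term, so taking $\gamma$ large does not work directly. This is the main obstacle.

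I would resolve it by taking $\gamma$ small rather than large, and perturbing around a point where the matrix is already positive semidefinite. At $\gamma=0$ the transformed matrix is
\[
\begin{bmatrix} Q+K_0^\tr RK_0 & K_0^\tr R\\ RK_0 & R\end{bmatrix}.
\]
Its Schur complement with respect to $R$ is $Q\succeq0$, so it is positive semidefinite, and its kernel is contained in $\{(x,u): Qx=0,\ u=-K_0x\}$. Adding $\gamma(I+\text{terms of order }\gamma)$ in the $(1,1)$ block together with $O(\gamma)$ off-diagonal terms, the Schur complement with respect to the $(2,2)$ block equals
\[
Q+\gamma I+\gamma\bigl(YBK_0+K_0^\tr B^\tr Y\bigr)-\gamma^2(\cdots).
\]
This is not obviously positive definite, because the cross term $\gamma(YBK_0+K_0^\tr B^\tr Y)$ is of the same order as $\gamma I$.

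The cleaner route is to redo the Schur complement in the original coordinates, where it cancels the $K_0$ terms exactly. In continuous time, with $R\succ0$, we have $U(P)\succ0$ if and only if
\[
A^\tr P+PA+Q-PBR^{-1}B^\tr P\succ0.
\]
Substitute $P=-\gamma Y$ and rewrite $A=A_{K_0}-BK_0$. The expression becomes
\[
\gamma I+Q+\gamma\bigl(K_0^\tr B^\tr Y+YBK_0\bigr)-\gamma^2 YBR^{-1}B^\tr Y.
\]
Complete the square using $R$:
\[
\gamma\bigl(K_0^\tr B^\tr Y+YBK_0\bigr)-\gamma^2 YBR^{-1}B^\tr Y=K_0^\tr RK_0-(K_0-\gamma R^{-1}B^\tr Y)^\tr R\,(K_0-\gamma R^{-1}B^\tr Y)\preceq K_0^\tr RK_0.
\]
The inequality runs the wrong way, so this does not close the argument. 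I therefore fall back on a standard choice. In continuous time, I would instead use the stabilizing solution $P_\epsilon$ of the ARE with weight $Q+\epsilon I$ in place of $Q$. This solution exists by the comparison theorem (\cref{lemma:comparison}), since $(A,B)$ is stabilizable and $(Q+\epsilon I)^{1/2}$ is trivially detectable. Then take $P_+=P_\epsilon-\delta I$ for small $\delta>0$. At $\delta=0$ the Schur complement of $U(P_\epsilon)$ with the original $Q$ equals $-\epsilon I$, which is negative, so this also fails as stated.

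The correct fix is to apply the ARE with weight $Q-\epsilon I$ conceptually. Concretely, take $P_+=P_\epsilon$ solving
\[
A^\tr P+PA+Q+\epsilon I-PBR^{-1}B^\tr P=0
\]
is wrong in sign. Instead, solve $A^\tr P+PA-PBR^{-1}B^\tr P=-I$, equivalently the ARE with weight $-I$. Its existence follows from stabilizability via the Lyapunov iteration applied to $K_0$: the stabilizing solution of the Riccati equation exists whenever a stabilizing $K_0$ does and the relevant Hamiltonian has no imaginary-axis eigenvalues. For this $P_+$, the Schur complement is $Q+I\succ0$, and since the $(2,2)$ block is $R\succ0$, we conclude $U(P_+)\succ0$.

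The discrete-time case is handled analogously. There the $(2,2)$ block is $R+B^\tr P_+B$, and its positive definiteness must be verified along the Riccati iteration. I regard this existence step, namely producing a solution of the Riccati equation with the indefinite weight $-I$ in both time domains, as the main obstacle.
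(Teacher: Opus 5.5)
Your proof has a genuine gap in the final step, and that step is the only one you rely on. There is first a sign slip. The equation you display, $A^\tr P+PA-PBR^{-1}B^\tr P=-I$, is the Riccati equation with weight $+I$, and for its solution the Schur complement of $U(P)$ is $Q-I$, not $Q+I$. The equation that gives Schur complement $Q+I$ is the one with weight $-I$, namely $A^\tr P+PA-I-PBR^{-1}B^\tr P=0$, and it has no symmetric solution in general under \cref{assumption:stabilizable}. Take $n=m=1$, $A=0$, $B=Q=R=1$ in continuous time. The equation becomes $-1-P^2=0$. Its Hamiltonian $\begin{bmatrix}0&-1\\1&0\end{bmatrix}$ has eigenvalues $\pm\sqrt{-1}$, so the condition you assumed (no imaginary-axis eigenvalues) is exactly what fails. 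The Kleinman/Lyapunov iteration does not rescue this, since its stabilizing guarantees rely on a positive semidefinite weight. Yet $U(0)=\operatorname{diag}(1,1)\succ0$ in this example, so the lemma holds while your $P_+$ does not exist. The root problem is that you demand a fixed margin $I$ in the Riccati inequality, which stabilizability and detectability do not imply. The discrete-time case, which you leave open, fails for the same reason: $A=B=Q=R=1$ with weight $-I$ gives $P^2+P+1=0$.

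The missing idea is that the lemma needs only an arbitrarily small margin, and you should obtain it locally around the stabilizing solution $P^\star$. You briefly wrote down the weight $Q-\epsilon I$ and then dropped it; that is exactly the paper's route. For small $\epsilon>0$, the Riccati equation with $Q$ replaced by $Q-\epsilon I$ has a solution $P_\epsilon$ with $P_0=P^\star$, depending continuously on $\epsilon$. This follows from the implicit function theorem, because the linearization at $P^\star$ is $dP\mapsto A_{K^\star}^\tr dP A_{K^\star}-dP$ (respectively $A_{K^\star}^\tr dP+dP A_{K^\star}$), which is invertible since $A_{K^\star}$ is stable. The Schur complement of $U(P_\epsilon)$ is then exactly $\epsilon I$, and $U_{22}(P_\epsilon)\succ0$ by continuity from $U_{22}(P^\star)\succeq R$. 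The paper cites Scherer for continuous time and proves the discrete-time case this way.

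Alternatively, your own congruence idea closes the proof if you expand around $(P^\star,K^\star)$ instead of $(0,K_0)$. With $T=\begin{bmatrix}I&0\\K^\star&I\end{bmatrix}$, the ARE gives $T^\tr U(P^\star)T=\operatorname{diag}(0,U_{22}(P^\star))$. Let $Y\succ0$ solve the Lyapunov equation for $A_{K^\star}$ with right-hand side $-I$. Then
\[
T^\tr U(P^\star-\delta Y)T=\begin{bmatrix}\delta I & O(\delta)\\ O(\delta) & U_{22}(P^\star)+O(\delta)\end{bmatrix}.
\]
Taking the Schur complement with respect to the $(1,1)$ block gives $U_{22}(P^\star)+O(\delta)\succ0$ for small $\delta>0$. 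This works in both time domains and avoids the implicit function theorem. Your first attempt failed precisely because at $(0,K_0)$ the off-diagonal block $K_0^\tr R$ does not vanish, so the $O(\gamma)$ cross terms compete with $\gamma I$.
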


The continuous-time case in \cref{lemma:dual-strict-feasibility} is from \cite[Th. 2.23]{scherer1990riccati}; see \Cref{appendix:proof-dual-strict-feasibility} for details in the discrete-time case.

\textbf{Proof of statements 1) and 2).}
According to \cite[Th. 3.1]{vandenberghe1996semidefinite},
for establishing the first statement, it suffices to confirm 
$|d^\star|<\infty$ and
the strict feasibility of \cref{eq:dual_P}, i.e., the optimal value of \cref{eq:dual_P} is finite and $U(P)\succ 0$ for some $P\in\mathbb{S}^n$.
We can verify the former condition $|d^\star|<\infty$ by \Cref{lemma:comparison}, which further proves the second statement.
Since $P^\star$ is maximal and $W \succeq 0$, this solution achieves the largest value among all feasible points in \cref{eq:dual_P}. Thus, we have proved the second statement in \Cref{lemma:strong-duality-SDP},
which also implies $0\leq d^\star= \langle P^\star,W \rangle<\infty$.

Next, to show dual strict feasibility,
\Cref{lemma:dual-strict-feasibility} guarantees the existence of some $P\in\mathbb{S}^n$ satisfying $U(P)\succ0$.
This lemma indeed ensures the strict feasibility of \cref{eq:dual_P}. 
Consequently,
\cite[Th. 3.1]{vandenberghe1996semidefinite}
ensures the first statement, i.e.,
the strong duality holds.
The second statement is clear from the above argument.

\textbf{Proof of statement 3)}
By the strong duality in the first statement,
\cite[Th. 3.1]{vandenberghe1996semidefinite}
further confirms that
a minimizer $Z^\star\in\mathbb{S}_+^{n+m}$  to \cref{eq:primal_sdp} exists.
Now,
any such $Z^\star$ is characterized by
the following KKT condition between \cref{eq:primal_sdp} and \cref{eq:dual_P}:
\begin{subequations}\label{eq:KKT}
\begin{equation}
\label{eq:KKT-primal}
     \Psi
   \left(
   \begin{bmatrix}
       A&B\\
       I&0
   \end{bmatrix}
   Z^\star
   \begin{bmatrix}
       A&B\\
       I&0
   \end{bmatrix}^\tr
   \right)
    +W = 0,\,Z^\star \succeq0,
\end{equation}
\begin{minipage}{0.49\columnwidth}
\vspace{-4mm}
\begin{equation}
\label{eq:KKT-dual}
U(P^\star)\succeq0,\,P^\star\in\mathbb{S}^n,
\end{equation}
\end{minipage}
\hfill
\begin{minipage}{0.49\columnwidth}
\vspace{-4mm}
\begin{equation}
 \label{eq:KKT-slackness}
\langle Z^\star,U(P^\star)\rangle=0
,
\end{equation}
\end{minipage}
\end{subequations}

\vspace{8pt}
\noindent 
where $P^\star$ solves \cref{eq:dual_P}.
From the statement 2) of \cref{lemma:strong-duality-SDP},
the stabilizing solution $P^\star\succeq0$
to the ARE \cref{eq:ARE_U}
constitutes a solution to \cref{eq:dual_P}.
Now, recall
\cref{eq:ARE_U} for $P^\star$, i.e.,
    $U_{11}(P^\star)-U_{12}(P^\star)U_{22}^{-1}(P^\star)U_{12}^\tr (P^\star)=0$.
Substituting this into the complementary slackness \cref{eq:KKT-slackness} yields
\begin{align}
     &0=\left\langle Z^\star ,  U(P^\star)   \right\rangle=\left\langle Z^\star , \begin{bmatrix}
        U_{11}(P^\star )& U_{12}(P^\star )\\
        U_{12}^\tr(P^\star) &U_{22}(P^\star)
    \end{bmatrix}\right\rangle \label{eq:slackness-proof}
    \\
    &
    =
    \Tr \left(
    Z^\star
    \begin{bmatrix}
        U_{12}(P^\star)U_{22}^{-1/2}(P^\star)\\
        U_{22}^{1/2}(P^\star)
    \end{bmatrix}
    \begin{bmatrix}
        U_{12}(P^\star)U_{22}^{-1/2}(P^\star)\\
        U_{22}^{1/2}(P^\star)
    \end{bmatrix}^\tr
    \right) \nonumber
\end{align}
from
 the ARE \cref{eq:ARE_U} with $P=P^\star$ and the factorization
\begin{align*}
  U(P^\star) =&  \begin{bmatrix}
        U_{12}(P^\star)U_{22}^{-1}(P^\star)U_{12}^\tr(P^\star)& U_{12}(P^\star )\\
        U_{12}^\tr(P^\star) &U_{22}(P^\star)
    \end{bmatrix}\\
    =&
     \begin{bmatrix}
        U_{12}(P^\star)U_{22}^{-1/2}(P^\star)\\
        U_{22}^{1/2}(P^\star)
    \end{bmatrix}
    \begin{bmatrix}
        U_{12}(P^\star)U_{22}^{-1/2}(P^\star)\\
        U_{22}^{1/2}(P^\star)
    \end{bmatrix}^\tr.
\end{align*}

Note that \cref{eq:KKT-slackness} implies
 $   \operatorname{rank}(Z^\star)+\operatorname{rank}(U(P^\star))\leq n+m.$ 
Thus, combining this with $\operatorname{rank}(U(P^\star))=\operatorname{rank}(U_{22}(P^\star))=\operatorname{rank}(R)= m$
(from
\cref{eq:Riccati-inequality_explicit} and
$P^\star\succeq0$),
we obtain $\operatorname{rank}(Z^\star)\leq n$, which enforces $Z^\star$ to be of the~form
\begin{equation} \label{eq:Z-factorization}
    Z^\star = 
    \begin{bmatrix}
    V_1\\
    V_2
    \end{bmatrix}
    \begin{bmatrix}
    V_1\\
    V_2
    \end{bmatrix}^\tr 
\end{equation}
for $V_1\in\mathbb{R}^{n\times n}$, $V_2\in\mathbb{R}^{m\times n}$.
Substituting this into \cref{eq:slackness-proof} gives
\begin{align*}
&\langle Z^\star,U(P^\star) \rangle
=
\left\|
    \begin{bmatrix}
    V_1\\
    V_2
    \end{bmatrix}^\tr 
    \begin{bmatrix}
        U_{12}(P^\star)U_{22}^{-1/2}(P^\star)\\
        U_{22}^{1/2}(P^\star)
    \end{bmatrix}
\right\|_F^2\\
=&\|V_1^\tr U_{12}(P^\star)U_{22}^{-1/2}(P^\star)+V_2^\tr  U_{22}^{1/2}(P^\star)\|_F^2=0.
\end{align*}
Hence, we obtain $V_1^\tr U_{12}(P^\star)U_{22}^{-1/2}(P^\star)+V_2 ^\tr U_{22}^{1/2}(P^\star)=0$,
which implies
    $V_2=-\left(U_{22}(P^\star)\right)^{-1}U_{12}^\tr(P^\star)V_1
    =K^\star V_1$
from \cref{eq:Kstar_U}.  We know from \cref{eq:Z-factorization} that $Z^\star$ must be of the form
\begin{equation}\label{eq:Zstar-proof}
    Z^\star = 
    \begin{bmatrix}
    V_1\\
    K^\star V_1
    \end{bmatrix}
    \begin{bmatrix}
    V_1\\
    K^\star V_1
    \end{bmatrix}^\tr = 
    \begin{bmatrix}
    I\\
    K^\star 
    \end{bmatrix}V_1V_1^\tr 
    \begin{bmatrix}
    I\\
    K^\star
    \end{bmatrix}^\tr.
\end{equation}
The construction of \cref{eq:primal_sdp}
indicates
that
\cref{eq:KKT-primal} is reduced to the Lyapunov equation \cref{eq:LQR_unif_Lyapunov} for $K=K^\star$, and 
$V_1V_1^\tr$ is uniquely determined as
$V_1V_1^\tr=X^\star$.
Hence, the primal problem is uniquely solved
by $Z^\star$ in \cref{eq:Zstar}.

\paragraph*{Proof of the uniqueness when $W\succ0$}
First, we verify that $P^\star$ from the ARE \cref{eq:ARE_U} uniquely solves \cref{eq:dual_P}.
Indeed, if we have two different solutions as
$\langle \hat P ,W\rangle=\langle P^\star ,W\rangle$ for $\hat P\neq P^\star$,
 we have $\Tr((P^\star-\hat P)W)=0$, and thus 
 $$(P^\star-\hat P)W=0$$
 from $P^\star\succeq \hat{P}$
 (Recall that 
 $\Tr(AB)=0$ for $A,B\succeq0$ implies $AB=0$).
However, this is a contradiction because $P^\star-\hat P\neq0$.
Hence, the dual problem has the unique solution $P^\star$.

Finally, $W\succ0$ implies $X^\star\succ0$, so that
$\Delta X^\star=0$ yields $\Delta=0$. Hence, the factorization in \cref{eq:Z-factorization} is also unique.

\begin{remark}
In our proof, we identify the form of $K^\star$ in a constructive manner by using the rank structure as in \cref{eq:slackness-proof}, \cref{eq:Z-factorization}, and \cref{eq:Zstar-proof}.
Namely, we do not need the knowledge or a guess of $K^\star$ for the strong duality.
This aspect may be viewed as a benefit, compared with the earlier results \cite{lee2018primal,bamieh2024linear}.
\hfill$\square$
\end{remark}

%% file: arXiv-sections/appendix.tex
\subsection{Derivation of the adjoint operator $\Psi^*$}\label{appendix:Psi*}

We can derive the adjoint operators from a simple algebra.
Specifically, for a block matrix
\begin{equation*}
Z=
\begin{bmatrix}
F & G\\
G^\tr  & H
\end{bmatrix}
\in \mathbb{S}^{2n},
\end{equation*}
in the continuous-time case, we have
\begin{equation*}
    \Psi_{\ct}(Z)=G+G^\tr .
\end{equation*}
Then, for any $P\in\mathbb{S}^n$,
it is not difficult to see that
\begin{align*}
\inner{\Psi_{\ct}(Z)}{P}
=
\inner{G+G^\tr }{P}  
=
\left\langle
Z,
\begin{bmatrix}
0 & P\\
P & 0
\end{bmatrix}
\right\rangle.
\end{align*}
Hence,
\begin{equation*}
    \Psi_{\ct}^\ast(P)
    =
    \begin{bmatrix}
    0 & P\\
    P & 0
    \end{bmatrix}.
\end{equation*}
Similarly, for the discrete-time case,
\begin{equation*}
    \Psi_{\rm dt}(Z)=F-H,
\end{equation*}
we have
\begin{align*}
\inner{\Psi_{\dt}(Z)}{P}=
\inner{F-H}{P} 
=
\left\langle
Z,
\begin{bmatrix}
P & 0\\
0 & -P
\end{bmatrix}
\right\rangle,
\end{align*}
so that
\begin{equation*}
    \Psi_{\rm dt}^\ast(P)
    =
    \begin{bmatrix}
    P & 0\\
    0 & -P
    \end{bmatrix}.
\end{equation*}

\subsection{Proof of \cref{lemma:dual}}\label{appendix:proof-dual}
Here, we show $d(P)=-\infty$ if $U(P)\nsucceq0$.
First,
if $U(P)$ is not positive semidefinite,
there exists an eigenvector $v=[v_1^\tr,v_2^\tr ]^\tr\neq 0$ such that
   $U(P) v
    = -\lambda v$
with some $\lambda>0$.

We separately show the two cases, i.e., $v_1\neq0$ and $v_1=0$.
When $v_1\neq0$, we can choose  
    $$X=\eta v_1v_1^\tr \succeq 0,\quad K=v_2v_1^\tr$$
with $\eta>0$. 
In addition, we can set $\|v_1\|=1$ without loss of generality.
Direct computations now ensure that
\begin{equation*}
    \begin{bmatrix}
        I\\
        K
    \end{bmatrix}X\begin{bmatrix}
        I\\
        K
    \end{bmatrix}^\tr
    = \eta \begin{bmatrix}
        v_1v_1^\tr & v_1v_2^\tr\\
        v_2^\tr v_1& v_2v_2^\tr
    \end{bmatrix} = \eta vv^\tr ,
\end{equation*}
which gives
    $L(K,X,P)= \langle P,W\rangle- \eta\lambda \|v\|^2$
from \cref{eq:Lagrangian_transform}.
We now see $d(P)=-\infty$ as $\eta \to \infty$. 

On the other hand, if $v_1=0$, we have $v_2\neq0$ by $v\neq0$. In addition, we have
\begin{equation*}
    v_2^\tr U_{22}(P)v_2=-\lambda \|v_2\|^2<0
\end{equation*}
for the bottom-right block of $U(P)$.
We now choose $$X=ww^\tr \succeq 0,\quad K= \eta v_2w^\tr$$ with a unit vector $w$ and $\eta>0$. With this choice, it is not difficult to verify that
 $$L(K,X,P)= \langle P,W\rangle+ w^\tr U_{11}(P)w- \eta^2 \lambda \|v_2\|^2,$$
 where $U_{11}(P)$ is the top-left block of $U(P)$.
 Consequently, we obtain $d(P)=-\infty$ by $\eta\to\infty$.
This completes the proof.

\subsection{Equivalence between \cref{eq:primal_KX} and the rank-constrained SDP \cref{eq:primal_rank}}\label{appendix:proof-equivalence}

Here, we verify that the feasible domains of
\cref{eq:primal_KX} and \cref{eq:primal_rank} are equivalent
when $W\succ0$.
Since any feasible $(K,X)$ to \cref{eq:primal_KX} can constitute a feasible point for \cref{eq:primal_rank} by its construction,
it suffices to show the converse direction.

Consider a feasible $Z\succeq0$ to \cref{eq:primal_rank}.
Since $\mathrm{rank}(Z)\leq n$,
there exist $V_1\in\mathbb{R}^{n\times n}$ and $V_2\in\mathbb{R}^{m\times n}$ such that
$$Z = \begin{bmatrix}
    V_1\\
    V_2
\end{bmatrix}\begin{bmatrix}
    V_1\\
    V_2
\end{bmatrix}^\tr.$$
The constraint of \cref{eq:primal_rank} is then reduced to
\begin{equation*}
    \begin{cases}
        (AV_1+BV_2)V_1^\tr +V_1(AV_1+BV_2)^\tr +W=0&\text{(CT)}\\
        (AV_1+BV_2)(AV_1+BV_2)^\tr-V_1V_1^\tr +W=0 ,&\text{(DT)}
    \end{cases}
\end{equation*}
where CT and DT stand for continuous time and discrete time, respectively.

Now, it is not difficult to show $\mathrm{rank}(V_1)=n$, i.e., $V_1$ is non-singular.
Indeed, for the continuous-time case, if there exists a vector $v\neq0$ satisfying $V_1^\tr v=0$,
we have
\begin{equation*}
    v^\her ((AV_1+BV_2)V_1^\tr +V_1(AV_1+BV_2)^\tr +W)v
    = v^\her Wv=0,  
\end{equation*}
which is a contradiction from $W\succ0$.
Hence, $\mathrm{rank}(V_1)=n$.
For the discrete-time case, it is clear that
\begin{equation*}
    V_1V_1^\tr = W+(AV_1+BV_2)(AV_1+BV_2)^\tr
    \succeq W\succ0,
\end{equation*}
which also implies $\mathrm{rank}(V_1)=n$.
Hence, we can rewrite any feasible $Z$ to \cref{eq:primal_rank} as
$$Z = \begin{bmatrix}
    I\\
    V_2V_1^{-1}
\end{bmatrix}V_1V_1^\tr \begin{bmatrix}
    I\\
    V_2V_1^{-1}
\end{bmatrix}^\tr.$$
By assigning $K=V_2V_1^{-1}$ and $X=V_1V_1^\tr$, this pair $(K,X)$ constitutes a feasible point of \cref{eq:primal_KX}.
Therefore, \cref{eq:primal_KX} and \cref{eq:primal_rank} have the equivalent feasible region.

\subsection{Proof of \cref{lemma:dual-strict-feasibility}}\label{appendix:proof-dual-strict-feasibility}

Here, we prove that 
$U(P_+)\succ0$ for some $P_+\in\mathbb{S}^n$.
The continuous-time case follows from \cite[Th. 2.23]{scherer1990riccati}.
One can also verify the discrete-time case in a similar manner.
For completeness, we provide a proof below.

First, we know that \cref{assumption:stabilizable} ensures the existence of the stabilizing solution $P^\star$ to the ARE \cref{eq:ARE_U}.
Let $Q(\epsilon)=Q-\epsilon I$.
Then, \cite[Th. 2.1]{konstantinov1993perturbation}\footnote{Theorem 2.1 from \cite{konstantinov1993perturbation} shows a more general result than our purpose.
For completeness, we provide in \cref{appendix:proof-DARE} a proof of its special case that suffices to establish \cref{lemma:dual-strict-feasibility}.
}
guarantees that
for any $\epsilon\in[0,\bar\epsilon)$ with a sufficiently small $\bar\epsilon>0$, the ARE
\begin{equation}\label{eq:ARE-eps}
    A^\tr PA-P+Q(\epsilon)-
    A^\tr PB(R+B^\tr PB)^{-1}B^\tr PA=0
\end{equation}
has a solution $P=P_\epsilon$ that is symmetric from the form of \cref{eq:ARE-eps}.
Further,
$P_\epsilon$ continuously depends on $\epsilon$ in a sufficiently small neighborhood of $\epsilon=0$.
Then, since $P^\star=P_0\succeq0$ ensures
$$R+B^\tr P_0 B\succ 0,$$ 
there exist a sufficiently small $\hat{\epsilon}\in(0,\bar{\epsilon})$ and
a symmetric matrix $P_+=P_{\hat{\epsilon}}\in\mathbb{S}^n$ 
such that  we have both
$R+B^\tr P_{\hat{\epsilon}} B\succ0$
and
\cref{eq:ARE-eps}, i.e.,
\begin{equation*}
    A^\tr P_{\hat{\epsilon}} A-P_{\hat{\epsilon}} +Q(\hat \epsilon)-
    A^\tr P_{\hat{\epsilon}} B(R+B^\tr P_{\hat{\epsilon}} B)^{-1}B^\tr P_{\hat{\epsilon}} A=0.
\end{equation*}
Hence, by
\begin{equation*}
    A^\tr P_{\hat{\epsilon}} A-P_{\hat{\epsilon}} +Q-
    A^\tr P_{\hat{\epsilon}} B(R+B^\tr P_{\hat{\epsilon}} B)^{-1}B^\tr P_{\hat{\epsilon}} A=\hat{\epsilon} I\succ0,
\end{equation*}
the Schur complement yields
\begin{equation*}
    U(P_{\hat\epsilon})
    = 
    \begin{bmatrix}
         A^\tr P_{\hat{\epsilon}} A-P_{\hat{\epsilon}}+Q&A^\tr P_{\hat{\epsilon}} B \\
        B^\tr P_{\hat{\epsilon}} A &R+B^\tr P_{\hat{\epsilon}}B
    \end{bmatrix}\succ0.
\end{equation*}
We finally remark that
when $Q\succ0$, the existence of $P\in\mathbb{S}^n$ satisfying $U(P)\succ0$ is obvious, as $U(0)=\operatorname{diag}(Q,R)\succ0$. 

\subsection{Solutions to perturbed discrete-time AREs}\label{appendix:proof-DARE}

Here, we show 
that even if we add a slight perturbation to $Q$ in the ARE \cref{eq:ARE_U} in discrete time, we still have a symmetric solution with an analytic dependence on the perturbation.
This is 
a special case of \cite[Th. 2.1]{konstantinov1993perturbation}, which 
is sufficient to show \cref{lemma:dual-strict-feasibility} for discrete-time systems.
\begin{proposition}
Suppose \cref{assumption:stabilizable} holds.
Then, for any $\epsilon\in(-\bar \epsilon,\bar \epsilon)$ with a sufficiently small $\bar{\epsilon}>0$,
the ARE \cref{eq:ARE-eps} with $Q(\epsilon)=Q-\epsilon I$ has a symmetric solution $P_\epsilon$.
Moreover, $P_\epsilon$ depends on $\epsilon$ analytically over $(-\bar \epsilon,\bar\epsilon)$.
\end{proposition}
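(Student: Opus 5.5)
The argument is an implicit function theorem applied to the discrete-time ARE, viewed as a map on symmetric matrices, around the stabilizing solution $P^\star=P_0$. Define, on the open set $\mathcal{O}:=\{P\in\mathbb{S}^n: R+B^\tr PB\succ0\}$ and for $\epsilon\in\mathbb{R}$,
\begin{equation*}
F(P,\epsilon):=A^\tr PA-P+Q-\epsilon I-A^\tr PB(R+B^\tr PB)^{-1}B^\tr PA .
\end{equation*}
$F$ maps $\mathcal{O}\times\mathbb{R}$ into $\mathbb{S}^n$, since every term is symmetric. It is real-analytic in $(P,\epsilon)$, because matrix inversion is analytic on the open set where $R+B^\tr PB$ is invertible. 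By \cref{lemma:comparison}, \cref{assumption:stabilizable} gives the stabilizing solution $P_0=P^\star\succeq0$. Then $R+B^\tr P_0B\succeq R\succ0$, so $P_0\in\mathcal{O}$ and $F(P_0,0)=0$.

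The next step is to compute the partial derivative $D_PF(P_0,0)$ as a linear map $\mathbb{S}^n\to\mathbb{S}^n$. Differentiate in the direction $H\in\mathbb{S}^n$, using $d(M^{-1})=-M^{-1}(dM)M^{-1}$ with $M=R+B^\tr P_0B$. Collect the terms with $K_0:=-(R+B^\tr P_0B)^{-1}B^\tr P_0A$, which is $K^\star$ in \cref{eq:Kstar_duality}. The cross terms combine into the familiar completion-of-squares identity:
\begin{equation*}
D_PF(P_0,0)[H]=(A+BK_0)^\tr H(A+BK_0)-H .
\end{equation*}
This is the standard fact that the Fréchet derivative of the Riccati map at $P$ is the Stein (discrete Lyapunov) operator of the closed loop $A+BK_P$. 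I would verify it by expanding the quadratic part $A^\tr HB M^{-1}B^\tr P_0A+A^\tr P_0BM^{-1}B^\tr HA-A^\tr P_0BM^{-1}B^\tr HBM^{-1}B^\tr P_0A$ and regrouping it as $A^\tr HA+A^\tr HBK_0+K_0^\tr B^\tr HA+K_0^\tr B^\tr HBK_0$, together with the $A^\tr HA$ and $-H$ terms already present.

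Because $P_0$ is stabilizing, $\rho(A+BK_0)<1$. The eigenvalues of the Stein operator $H\mapsto A_0^\tr HA_0-H$ are $\lambda_i\lambda_j-1$, where $\lambda_i$ are the eigenvalues of $A_0:=A+BK_0$. These are all nonzero since $|\lambda_i\lambda_j|<1$. Hence the operator is invertible on $\mathbb{R}^{n\times n}$. It also maps $\mathbb{S}^n$ into $\mathbb{S}^n$, so it is injective and therefore bijective on the finite-dimensional space $\mathbb{S}^n$. The analytic implicit function theorem then gives $\bar\epsilon>0$ and a unique analytic map $\epsilon\mapsto P_\epsilon\in\mathcal{O}\subset\mathbb{S}^n$ on $(-\bar\epsilon,\bar\epsilon)$ with $F(P_\epsilon,\epsilon)=0$. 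These $P_\epsilon$ are the required symmetric solutions of \cref{eq:ARE-eps}. If needed, shrink $\bar\epsilon$ so that $P_\epsilon$ stays in $\mathcal{O}$ and the inverse in \cref{eq:ARE-eps} is well defined; by continuity $A+BK_{P_\epsilon}$ also stays Schur.

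The main obstacle is the derivative computation: the cross terms must collapse exactly to the closed-loop Stein operator. The other point needing care is working on $\mathbb{S}^n$ rather than $\mathbb{R}^{n\times n}$, so that the implicit function theorem produces symmetric solutions directly. Everything else is a standard application of the implicit function theorem.
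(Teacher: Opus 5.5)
Your proof is correct and takes the same route as the paper: the implicit function theorem at $(P^\star,0)$, with the derivative identified as the closed-loop Stein operator $H\mapsto A_{K^\star}^\tr H A_{K^\star}-H$, which is invertible because $\rho(A_{K^\star})<1$. Treating $F$ as a map $\mathbb{S}^n\times\mathbb{R}\to\mathbb{S}^n$ and checking bijectivity of the Stein operator on $\mathbb{S}^n$ gives the symmetry of $P_\epsilon$ a bit more cleanly than the paper's vectorized map into $\mathbb{R}^{n^2}$, whose domain and codomain do not have the same dimension.
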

\begin{proof}
First, let us define the mapping 
$F:\mathbb{S}^n\times\mathbb{R}\to\mathbb{R}^{n^2}$
\begin{align*}
F(P,\epsilon) = \operatorname{vec}&(
A^\tr PA-P+Q(\epsilon)\\
&-
    A^\tr PB(R+B^\tr PB)^{-1}B^\tr PA).
\end{align*}
By \cref{assumption:stabilizable}, it is clear that $F(P^\star,0)=0$.
To use the implicit function theorem around $(P^\star,0)$,
we compute $\nabla_P F(P^\star,\epsilon)|_{\epsilon=0}.$
Now, it is not very difficult to see that
\begin{align*}
    &
    \operatorname{vec}^{-1}
    \left(F(P^\star+dP,\epsilon)-F(P^\star,\epsilon)\right)\\
    =&
    A^\tr dPA-dP\\
    &+ A^\tr P^\star B(R+B^\tr P^\star B)^{-1}B^\tr dPB(R+B^\tr P^\star B)^{-1}B^\tr P^\star A\\
    &-A^\tr dPB(R+B^\tr P^\star B)^{-1}B^\tr P^\star A  \\
    &-A^\tr P^\star B(R+B^\tr P^\star B)^{-1}B^\tr dPA
    +O(\|dP\|_F^2).
\end{align*}
Using $K^\star= -(R+B^\tr P^\star B)^{-1}B^\tr P^\star A$, we can write
\begin{align*}
    &\operatorname{vec}^{-1}
    \left(F(P^\star+dP,\epsilon)-F(P^\star,\epsilon)\right)\\
    =& A^\tr dPA-dP+ (K^\star)^\tr B^\tr dP BK^\star\\
    &+A^\tr dPBK^\star
    +(K^\star)^\tr B^\tr dPA+ O(\|dP\|_F^2)\\
    =&
    (A+BK^\star)^\tr dP (A+BK^\star)-dP+
    O(\|dP\|_F^2).
\end{align*}
Hence, by the standard formula $\operatorname{vec}(S_1VS_2)=(S_2^\tr \otimes S_1)\operatorname{vec}(V)$,
\begin{align*}
    &F(P^\star+dP,\epsilon)-F(P^\star,\epsilon)\\
=& \left((A_{K^\star}^\tr \otimes A_{K^\star}^\tr) -I\right)
\operatorname{vec}(dP)
+O(\|dP\|_F^2).
\end{align*}
It then follows that
\begin{equation*}
  \nabla_P F(P^\star,\epsilon)|_{\epsilon=0}  
  = (A_{K^\star}^\tr \otimes A_{K^\star}^\tr) -I,
\end{equation*}
which is invertible from $\rho(A_{K^\star})<1$.
Hence, the classical (vector-valued) implicit function theorem \cite[Th. 2.3.5]{krantz2002implicit} 
around $(P^\star,0)$ for $F(P,\epsilon)=0$
confirms that
there exists a sufficiently small $\bar \epsilon>0$ such that
for any $\epsilon\in(-\bar\epsilon,\bar\epsilon)$, we have
$$F(P_\epsilon,\epsilon)=0$$ with some $P_\epsilon$, and 
$P_\epsilon$ is an analytic function of $\epsilon$ on $(-\bar\epsilon,\bar\epsilon)$.
Finally, since the implicit function theorem is applied to the space of symmetric
matrices, the resulting local solution $P_\epsilon$ remains symmetric.
\end{proof}